%
%
%
\documentclass[11pt]{amsart}
\usepackage{amsmath, amsthm, amssymb, amsfonts, amsxtra}
\usepackage{mathrsfs} 
\usepackage{graphics}
\usepackage{graphicx}
\usepackage{txfonts}
\usepackage[usenames]{color}
\usepackage{enumerate}
\usepackage{stmaryrd} 
                                                    
\numberwithin{equation}{section}
                                                 

\newtheorem{thm}{Theorem}[section]
\newtheorem*{*thm}{Theorem} 
\newcommand{\bt}{\begin{thm}}
\newcommand{\et}{\end{thm}}

\newtheorem*{*ex}{Example}

\newtheorem{cor}[thm]{Corollary}   
\newtheorem*{*cor}{Corollary}

\newcommand{\bc}{\begin{cor}}
\newcommand{\ec}{\end{cor}}

\newtheorem{lem}[thm]{Lemma} 
\newtheorem*{*lem}{Lemma}  

\newcommand{\bl}{\begin{lem}}
\newcommand{\el}{\end{lem}}

\newtheorem{prop}[thm]{Proposition}
\newcommand{\bp}{\begin{prop}}
\newcommand{\ep}{\end{prop}}
\newtheorem*{*prop}{Proposition} 

\newtheorem{defn}[thm]{Definition}
\newcommand{\bd}{\begin{defn}}   
\newcommand{\ed}{\end{defn}}
\newtheorem*{*defn}{Definition}

\newtheorem{rmrk}[thm]{Remark}  
\newtheorem*{*rmrk}{Remark} 

\newtheorem{claim}{Claim}

\newcommand{\be}{\begin{equation}}
\newcommand{\ee}{\end{equation}}

\providecommand{\R}{\mathbb{R}}

\providecommand{\N}{\mathbb{N}}

\newcommand{\E}{\mathbb{E}}
\providecommand{\sphere}{\mathbb{S}}


\providecommand{\mychi}{\raise2pt\hbox{$\chi$}}


\providecommand{\abs}[1]{\left\lvert#1\right\rvert}

\providecommand{\qgp}[2]{\raise3pt\hbox{$#1$}\!\big/\!\lower3pt\hbox{$#2$}}

\newcommand{\diam}{\operatorname{diam}}
\newcommand{\vare}{\varepsilon}

\newcommand{\Fm}{{\mathcal F}}

\newcommand{\set}{\rm{set}}

\newcommand{\disjointunion}{\sqcup}

\newcommand{\mass}{{\mathbf M}}


\newcommand{\vol}{\operatorname{Vol}}


\def\XXint#1#2#3{{\setbox0=\hbox{$#1{#2#3}{\int}$}
\vcenter{\hbox{$#2#3$}}\kern-.5\wd0}}


\begin{document}
\title[$\Fm$ limit with no geodesics]{ An intrinsic flat limit of
Riemannian manifolds with no geodesics}
\author[J. Basilio, D. Kazaras, \& C. Sormani]{Jorge Basilio, Demetre Kazaras, and Christina Sormani}
\address{CUNY Graduate Center}
\email{jorge.math.basilio@gmail.com}
\address{Simons Center for Geometry and Physics}
\email{demetre.kazaras@stonybrook.edu}
\address{CUNY Graduate Center and Lehman College}
\email{sormanic@gmail.com}
\thanks{The first author was partially supported by NSF DMS 1006059, the second author 
by NSF DMS 1547145, and the third author by NSF DMS 1612049}
\date{}
\begin{abstract}
In this paper we produce a sequence of Riemannian manifolds 
$M_j^m$, $m \ge 2$, which converge in the intrinsic flat sense
to the unit $m$-sphere with the restricted Euclidean distance.
This limit space has no geodesics achieving the distances between points,
exhibiting previously unknown behavior of intrinsic flat limits. 
In contrast, any compact Gromov-Hausdorff limit of a sequence of Riemannian manifolds is 
a geodesic space. Moreover, if $m\geq3$,
the manifolds $M_j^m$ may be chosen to have positive scalar curvature.
\end{abstract}
\maketitle
\thispagestyle{empty} 


\section{Introduction}\label{S:intro}
In 1981, Gromov introduced the Gromov-Hausdorff
distance between metric spaces, $d_{GH}((X_1, d_1), (X_2, d_2))$,
and proved that compact Gromov-Hausdorff limits
of Riemannian manifolds are
geodesic metric spaces \cite{Gromov-1981}.
That is, the distance between any pair of points in the limiting metric space
is realized by the length of a curve between the points.  
Recall that by the Hopf-Rinow Theorem, any compact Riemannian manifold $M$
with the length metric $d_M(p,q)$, defined by taking the infimum of
lengths of curves from $p$ to $q$, 
is a geodesic metric space.
If the Riemannian manifold is embedded into Euclidean space,  
one may also study a different metric space, $(M, d_\E)$
defined using the restricted Euclidean metric $d_\E$.   Observe that
the standard $m$-dimensional sphere $\sphere^m\subset\E^{m+1}$ with 
the Euclidian distance $d_\E$ is an example of a metric space 
$(\sphere^m, d_\E)$ with no geodesics.
Geodesic spaces and the Gromov-Hausdorff distance are reviewed
in Sections \ref{sec:backgroundgeo} and \ref{sec:backgroundghdist}.

In \cite{SW-JDG}, the third author and Wenger introduced the
intrinsic flat distance $d_{\mathcal{F}}$ between two integral current spaces
$(X_1,d_1,T_1)$ and $(X_2, d_2, T_2)$. An $m$-dimensional 
integral current space is a 
metric space $(X,d)$ with an 
collection of oriented biLipschitz charts and an
integral $m$-current $T$ such that $X$ is the set of
positive density of $T$. For example, a compact oriented
manifold $(M,g)$ has a canonical integral current space structure,
$\left(M, d_M, \int_M\right)$. If $M$ is embedded in
Euclidean space then we can create a different integral
current space, $\left(M, d_\E, \int_M\right)$ whose metric depends upon that embedding.  
We review these spaces and $d_\mathcal{F}$ in Sections \ref{sec:backgroundint} and \ref{sec:backgroundifdist}, respectively.

In the Appendix to \cite{SW-JDG}, the second author studied
a sequence of Riemannian $2$-spheres $(M_j, d_{M_j})$, $j=1,2,\ldots,$
constructed by removing a pair of tiny balls from two standard spheres and 
connecting the resulting boundaries by an increasingly thin tunnel.
These manifolds converge in the Gromov-Hausdorff
sense to a metric space, $(Y, d_Y)$, which is a pair of standard spheres 
connected by a line segment of length $1$ endowed with the length metric, $d_Y$.
On the other hand, the intrinsic flat limit of this sequence
consists of the pair of spheres $Y'\subset Y$ {\em{without}} the line segment.  
See Figure~\ref{fig-not-length}.
\begin{figure}[h!]
\includegraphics[scale=.4]{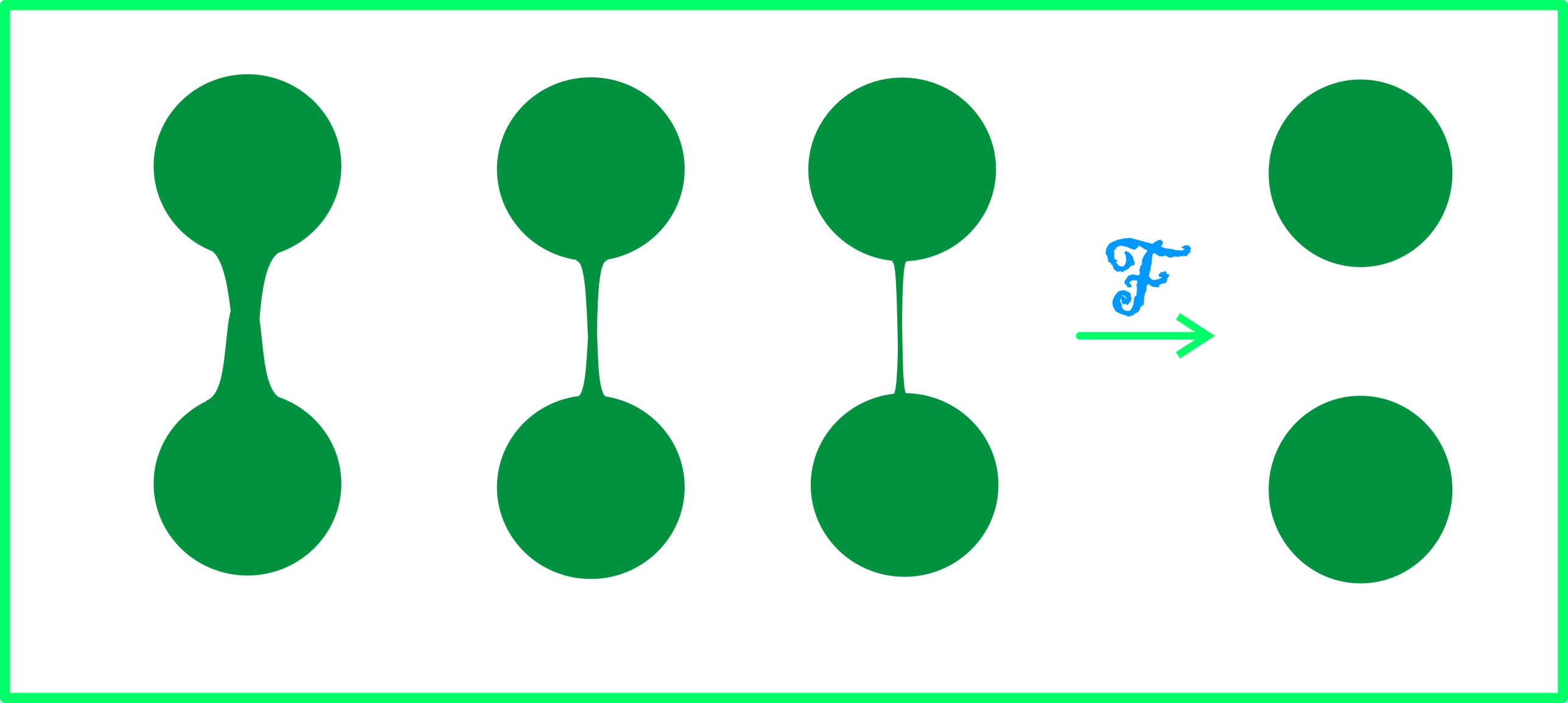}
\caption{The tunnel
disappears under $\mathcal{F}$ convergence so that
the $\mathcal{F}$ limit space is not a geodesic space \cite{SW-JDG}.}
\label{fig-not-length}
\end{figure}

\noindent In other words,
\[
\left(M_j,d_{M_j},\int_{M_j}\right)\to\left(Y',\,\, d_Y|_{Y'\times Y'}, \,\,\int_{Y'}\,\right)
\]
with respect to the intrinsic flat distance.
The line segment does not appear in the intrinsic flat limit 
since the current $\int_{Y'}$ 
does not have positive density there. Since $Y'$ is not path-connected, this example shows that 
intrinsic flat limits of Riemannian manifolds need not be geodesic metric spaces.   

When the second author presented this paper at the {\em Geometry Festival}
in 2009, various mathematicians asked whether it was possible to prove that
intrinsic flat limits of Riemannian manifolds are locally geodesic spaces:
{\em for all $p\in M_\infty$, is there a neighborhood
$U$ about $p$ such that all pairs of points in $U$ are joined by a geodesic
segment?} In this paper we provide an example demonstrating that this is not the case. 
More precisely, we prove the following theorem.
\bt\label{T:mainresult}
For each $m\geq2$, there is a sequence of closed, oriented 
$m$-dimensional Riemannian manifolds $M_j^m$,
so that the corrisponding integral current spaces 
$\mathcal{F}$-converge to 
\be
M_\infty = \left(\sphere^m,\,\, d_{\E^{m+1}},\,\, \int_{\sphere^m}\,\right). 
\ee
If $m\geq3$, the manifolds $M_j$ may be chosen to have positive scalar curvature.
\et
Since the metric space $(\sphere^m,\,\, d_{\E^{m+1}})$ is not a length 
space, we immediately obtain a negative answer to the above question. 
\begin{cor}\label{cor:maincor}
For $m\geq2$, the intrinsic flat limit of closed, oriented 
$m$-dimensional Riemannian manifolds need not be locally geodesically complete.
\end{cor}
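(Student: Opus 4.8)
The plan is to build $M_j^m$ by the same ``sewing'' philosophy as in the Appendix of \cite{SW-JDG}, but now running the construction over the \emph{entire} sphere rather than along a single tunnel. Start with the round sphere $\sphere^m\subset\E^{m+1}$ (or rather $(\sphere^m,d_{\E^{m+1}})$ which we wish to realize as the limit), choose a fine net of points $p_1,\dots,p_{N_j}\in\sphere^m$ with $N_j\to\infty$, and around each $p_i$ excise a small geodesic ball and glue in a thin ``wormhole'' (a short neck attached to a tiny capping sphere, or better, pair up the excised balls and join them by thin tunnels as in the model example) so that the resulting manifold $M_j^m$ is still closed, oriented, and smooth. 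The diameters and the intrinsic lengths of these necks are chosen to shrink fast enough (say, neck length $\le \vare_j\to 0$ and neck ``width'' even faster) that two effects hold simultaneously: (i) the identity-type map $M_j^m\to\sphere^m$ collapsing each neck to its base point is an $\vare_j$-isometry with respect to $d_{M_j}$ on the source and $d_{\E^{m+1}}$ on the target --- this is where one checks that going ``through'' a neck is never shorter than the chord, because the net is fine and the necks are short, so the length metric $d_{M_j}$ is squeezed between $d_{\E^{m+1}}$ and $d_{\E^{m+1}}+o(1)$; and (ii) the total $m$-volume of all the necks (and the re-glued caps) tends to $0$, so that the current structures $\int_{M_j}$ and $\int_{\sphere^m}$ are close in mass, and the excised/added regions contribute negligibly.

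The key steps, in order, are: (1) fix the combinatorial data (net, ball radii, neck parameters) as functions of $j$ with explicit rates; (2) verify $M_j^m$ is a closed oriented Riemannian manifold and, when $m\ge 3$, that the necks can be taken with positive scalar curvature --- here one uses the standard Gromov--Lawson / Schoen--Yau connect-sum technology: a thin enough Riemannian tube capping off a small round sphere can be given $\mathrm{Scal}>0$ when the codimension-of-the-neck is at least $2$, i.e.\ $m\ge 3$, and the ambient round metric already has $\mathrm{Scal}>0$; (3) prove the metric estimate $|d_{M_j}(x,y)-d_{\E^{m+1}}(\pi_j x,\pi_j y)|\le \vare_j$ for the projection $\pi_j$, which reduces to a covering/triangle-inequality argument plus the observation that a path in $M_j$ avoiding all necks projects isometrically and a path using a neck loses at most the neck length; (4) assemble the intrinsic flat estimate: realize both $M_j^m$ and $M_\infty$ as integral currents in a common metric space (use the embedding $\sphere^m\subset\E^{m+1}$ and push $M_j$ in via $\pi_j$, or use Wenger's compactness together with the filling/Lipschitz estimates), and bound $d_{\mathcal F}$ by a filling current supported on the thin necks and a boundary term, both of which have mass $O(\mathrm{vol}(\text{necks})+\vare_j\cdot\mathrm{vol}(\sphere^m))\to 0$; (5) identify the limit: since the necks have vanishing density, the set of positive density of the limiting current is exactly $\sphere^m$, with distance $d_{\E^{m+1}}$ by step (3), giving $M_\infty=(\sphere^m,d_{\E^{m+1}},\int_{\sphere^m})$.

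The main obstacle I expect is step (3) together with the interaction between steps (3) and (4): one must choose the net fine enough that the \emph{chordal} (extrinsic Euclidean) distance, not the round intrinsic distance, is what survives in the limit, while at the same time keeping the necks long enough to be smooth and (for $m\ge 3$) positively curved. Concretely, if the net has scale $\delta_j$ and the necks have length $\ell_j$, then a path on $M_j^m$ between nearby base points runs along the sphere and can cut across at most finitely many necks; one needs $\ell_j\ll \delta_j$ so that cutting across necks never beats the straight chord, yet the geodesics of $d_{M_j}$ still cannot leave $\sphere^m$ except through necks, so they are forced to ``hug'' the sphere and pick up the \emph{intrinsic} round length on the sphere pieces. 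The resolution is that over scale $\delta_j\to 0$ the round and chordal metrics agree to second order, $d_{\sphere^m}=d_{\E^{m+1}}+O(d_{\E^{m+1}}^3)$, so as long as $\delta_j\to 0$ the accumulated discrepancy along any path is $o(1)$; making this uniform over all pairs of points, including antipodal ones where one sums $O(1/\delta_j)$ contributions each of size $O(\delta_j^3)$, is the technical heart of the argument and forces the quantitative choice $\ell_j\ll\delta_j^2$, $\delta_j\to 0$. The positive-scalar-curvature refinement is then a bolt-on: replace each naive neck by a Gromov--Lawson neck, which only shrinks volumes further and does not affect the metric estimates.
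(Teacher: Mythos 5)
You have written a proof sketch for Theorem~\ref{T:mainresult} rather than for Corollary~\ref{cor:maincor}. That substitution is defensible, since the corollary follows from the theorem in one line: for any two distinct $x,y\in\sphere^m$, the unique Euclidean midpoint $(x+y)/2$ lies strictly inside the ball and so is not on $\sphere^m$, hence $(\sphere^m,d_{\E^{m+1}})$ has no geodesics at any scale and is in particular not locally geodesically complete. That observation (made in Section~\ref{sec:backgroundgeo}) is the entire content of the paper's proof of the corollary; you should state it, since without it Theorem~\ref{T:mainresult} does not by itself yield the corollary.

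The more serious problem is that your sketch of the theorem contains a step that would fail, and it is precisely the step you identify as the technical heart. You propose necks of length $\ell_j\ll\delta_j^2$ between \emph{nearby} net points and argue that the chordal metric emerges because ``the round and chordal metrics agree to second order.'' This cannot work. If all necks are shorter than the net scale, then no path in $M_j$ between distant points can do better than hugging the sphere, and $d_{M_j}$ restricted to the sphere converges to the \emph{round} metric $d_{\sphere^m}$, not the chordal one. Summing chordal increments of size $O(\delta_j)$ along a spherical path simply recovers arc length up to $O(\delta_j^2)$ per step; it does not and cannot produce $d_{\E^{m+1}}(x,y)$, which for near-antipodal $x,y$ is roughly $2$ while the arc length is roughly $\pi$. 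The inequality you want (``cutting across necks never beats the straight chord'') has the geometry backwards: for the limit to be $d_{\E^{m+1}}$, traveling through a tunnel \emph{must} strictly beat any path on the sphere whenever the endpoints are far apart. In the paper's construction (Definitions~\ref{D:defofYepsilon} and~\ref{D:defofXepsilon}), \emph{every} pair of net points $p_i,p_j$ is joined by its own tunnel of length $L^i_j=d_{\E^{m+1}}(q^i_j,q^j_i)$ -- these lengths are order $1$, not order $\vare$ -- and only the tunnel \emph{width} $\rho(\vare)=\vare/N(\vare)^2$ is sent to zero (so that the total tunnel volume vanishes and the pipe-filling estimate of Theorem~\ref{T:pipefilling} applies with a constant controlled by $\vol$, $\diam$ and $L$). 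The long thin tunnels are the mechanism that injects the chordal shortcuts into the length metric; without them, and with necks of vanishing length as you propose, the intrinsic flat limit would be the geodesic space $\big(\sphere^m,d_{\sphere^m},\int_{\sphere^m}\big)$ and the corollary would not follow.
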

To construct the Riemannian manifolds $M_j$ in Theorem \ref{T:mainresult},
we modify the standard $m$-sphere by gluing in increasingly thin tunnels
between pairs in an increasingly dense collection of removed balls.
The lengths of the tunnels approximate the Euclidean distance
between the points at the centers of the balls they replace.
See Figure~\ref{fig-many-tunnels}. The original plan for this construction in 
two dimensions was conceived by the first and last authors and presented a few
years ago but never published.

\begin{figure}[h!]
\includegraphics[scale=.4]{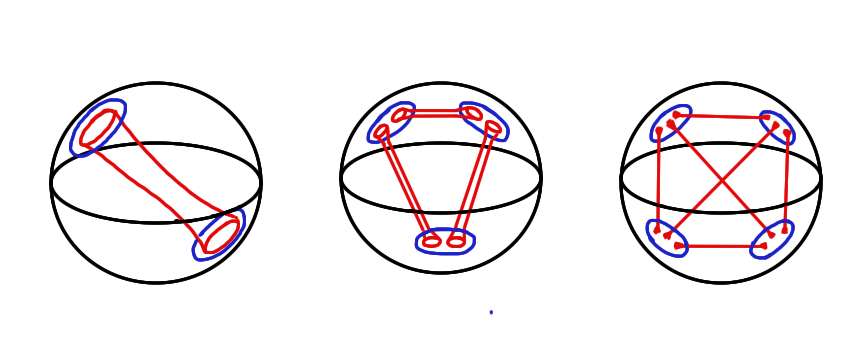}
\caption{A schematic depiction of the sequence in Theorem \ref{T:mainresult}}
\label{fig-many-tunnels}
\end{figure}

In \cite{Gromov-Plateau}, Gromov suggested that perhaps the natural notion of 
convergence for sequences of manifolds with positive scalar curvature is the intrinsic 
flat convergence. Note that the classic Ilmanen Example \cite{SW-JDG}
demonstrates that a sequence of three dimensional manifolds of positive scalar 
curvature with increasingly many increasingly thin increasingly dense wells has 
no Gromov-Hausdorff limit but does converge naturally in the intrinsic flat sense 
to a sphere. Thus the second and third authors decided to complete the 
construction of our example with positive scalar curvature when the dimension 
is $\ge 3$.   

In order to achieve the positive scalar curvature condition, we
design the tunnels using a refinement of the classical Gromov-Lawson 
construction \cite{GL80} recently obtained by Dodziuk \cite{Dod18}. 
See also Schoen-Yau's classic tunnel construction in
\cite{Schoen-Yau-tunnels}. Topologically,
each $M_j$ is the connected sum of a large number of copies of $S^1\times S^2$.

We begin the construction in Section \ref{sec:revisedpipe} by 
proving our main technical result, Theorem~\ref{T:pipefilling}, which 
allows us to estimate the intrinsic flat distance between a space with a tunnel and a space
with a thread.
This is an extension of the pipe-filling technique developed by the second author 
in the Appendix of \cite{SW-JDG}.
In \cite{SW-JDG}, this technique was applied to prove the $M_j'$ 
(the pair of spheres with a tunnel between them) and
$Y$ (the pair of spheres with a thread between them)
could be isometrically embedded into a common space.
This then allowed one to estimate the
intrinsic flat distance and Gromov-Hausdorff distance between
$M_j'$ and $M_\infty'\subset Y$.   
In  Theorem~\ref{T:pipefilling}, we start with an arbitrary Riemannian
manifold and a pair of points in that manifold and a length $L$ less
than the distance between the points.   
We show that the manifold
created by joining the points with a thin enough tunnel of length $L$
is close in the intrinsic flat sense to the space 
created by joining the points with a thread of length $L$  [Theorem~\ref{T:pipefilling}].   
As before, our estimate on 
the intrinsic flat distance between the space depends on volumes
and diameters while the Gromov-Hausdorff distance depends on the 
width of the tunnel and diameters.

We apply this pipe filling technique inductively
in Proposition~\ref{P:Fclosetospherewthreads} to prove that
our sequence, $M_{\vare_j}$, of spheres with increasingly dense tunnels 
[Definitions~\ref{D:defofXepsilon} and~\ref{D:ICSassociatedtoXep}] are
increasingly close to a sequence, $N_{\vare_j}$, of spheres with increasingly 
dense threads [Definitions~\ref{D:defofYepsilon} and~\ref{D:ICSassociatedtoYep}].
We apply a direct filling construction to prove that the spheres with
the increasingly dense threads viewed as integral current spaces,
$N_{\vare_j}$ converge to the sphere with the restricted Euclidean metric,
$M_\infty=(\sphere^m, d_{\E}, \int_{\sphere^m})$, in the intrinsic flat sense 
[Proposition~\ref{P:Fclosetospherewrestricted}]. Note that the threads are not
part of the integral current space, $N_{\vare_j}$, as they have lower density.
These are combined to prove Theorem~\ref{T:mainresult}.

It should be noted that the manifolds in our sequence have closed minimal 
hypersurfaces of increasingly small area. These stable minimal surfaces are 
located within the increasingly thin tunnels. In \cite{Sormani-Scalar}, the 
third author has conjectured that if one were to impose a uniform positive lower bound on 
\be
MinA(M^3)= \min\{ Area(\Sigma^2):\,\, \Sigma \textrm{ closed minimal in } M^3\}
\ee
that this combined with nonnegative scalar curvature would guarantee that the 
limit space has geodesics between every pair of points. This is a three dimensional conjecture.
It would be interesting to explore if other kinds of tunnels might be constructed in
higher dimensions to create limits with no geodesics even with a uniform 
positive lower bound on MinA.

The authors would like to thank Brian Allen, Edward Bryden, Lisandra Hernandez, 
Jeff Jauregui, Sajjad Lakzian, Dan Lee, Raquel Perales, and
Jim Portegies for many conversations about scalar curvature and convergence.  We 
would like to thank Jozef Dodziuk, Misha Gromov, Marcus Khuri, Blaine Lawson, 
Rick Schoen, and Shing-Tung Yau for their interest in these questions.

\section{Background}\label{sec:background}
Here we provide a brief review of metric spaces, length spaces and
integral current spaces, followed by a review of the Gromov-Hausdorff and
intrinsic flat distances between these spaces.
\subsection{Review of Geodesic Spaces:}\label{sec:backgroundgeo}
Given any metric space, $(X,d)$, 
one may define the length of a rectifiable curve $C:[0,1]\to X$ as
follows:
\be\label{Back1}
L_d(C)= \sup \left\{ \sum_{i=1}^N d(C(t_i), C(t_{i-1})): \, 
	0=t_0<t_1<\cdots <t_N=1, \, N \in \N\right\}.
\ee
If every pair of points in $X$ is connected by a rectifiable curve,
then one can define the induced length metric, $d_X$, on $X$ as
follows:
\be \label{Back2}
d_X(p,q) = \inf\{ L_d(C)\colon C(0)=p, \, C(1)=q \}. 
\ee
If $X$ is compact, then this infimum is achieved and any
curve achieving the infimum is called a {\emph{minimizing geodesic}}.   
A geodesic metric space, is a metric space of the form, $(X, d_X)$,
in which the distance between any pair of points is achieved
by a minimizing geodesic.  See \cite{BBI} for more details about length structures.

Given a pair of geodesic spaces, $(X_i, d_{X_i})$ with base points $x_i\in X_i$ for
$i=1,2$, one may produce a new geodesic space by identifying $x_1$ 
with $x_2$. We denote this by
\be
X_1 \disjointunion_{x_1 \sim x_2} X_2.
\ee
The metric on $X_1 \disjointunion_{x_1 \sim x_2} X_2$ is defined using 
curves that pass from $X_1$ to $X_2$ via the identified points as in (\ref{Back1}) and
the new distance is defined as in (\ref{Back2}).   

Consider the example of the sphere with the restricted
Euclidean metric $(\sphere^m, d_\E)$.  By the Law of Cosines:
\be\label{E:lawofcosines}
d_\E(x,y) = \sqrt{2 - 2\cos d_{\sphere^m}(x,y)} 
\ee
This is biLipschitz equivalent to $d_{\sphere^m}$ with Lipschitz
constants $1$ and $\pi/2$. However, $(\sphere^m, d_\E)$ is
not a geodesic space. In fact, for any pair of distinct points $x,y\in\sphere^m$, 
there is no geodesic connecting $x$ and $y$. Indeed,
there cannot exist midpoints, $z$, such that
\be
d_\E(x, z)= d_\E(y, z) = d_\E(x,y)/2.
\ee
\subsection{Review of Uniform Convergence of a metric space}
Fix a space $X$. A sequence of metrics $(d_i)$, $i=1,2,\ldots$, on $X$ is said to 
{\emph{converge uniformly}} to a metric $d$ on $X$ if
if 
\be
	\sup_{x,x' \in X} \abs{d_i(x,x') - d(x,x')} \to 0 
\ee
as $i\to\infty$.
A sequence of metric spaces $(X_i,\rho_i)$ converges uniformly to 
a metric space $(X,d)$ if there exists a sequence of metrics $(d_i)$ on 
$X$ such that $(X_i,\rho_i)$ is isometric to $(X,d_i)$, for all $i$, and $d_i$ 
converges uniformly to $d$. 
\subsection{Review of the Gromov-Hausdorff Distance}\label{sec:backgroundghdist}
The Gromov-Hausdorff distance was defined in \cite{Gromov-1981}.  See also
\cite{BBI}.

A distance preserving map, $F: (X_1, d_1) \to (X_2, d_2)$ is a map such that
\be
d_2(F(a), F(b)) = d_1(a,b) \qquad \forall a,b, \in X_1.
\ee
Observe that the standard embedding $F: \sphere^2 \to \E^3$ is distance 
preserving when viewed as $F: (\sphere^2, d_\E) \to (\E^3, d_\E)$ but is not 
distance preserving when $\sphere^2$ is given the length metric $d_{\sphere^2}$.

The Gromov-Hausdorff distance between two metric spaces is defined as
\be
d_{GH}((X_1, d_1), (X_2, d_2)) =\inf_Z d^Z_H(\varphi_1(X_1), \varphi_2(X_2))
\ee
where the infimum is taken over all metric spaces $(Z, d^Z)$ and over all
distance preserving maps $\varphi_i: (X_i, d_i) \to (Z, d^Z)$.
Here the Hausdorff distance between subsets $A,B \subset Z$ is defined by
\be
d^Z_H(A, B) := \inf\{ r:\, A\subset T_r(B), \, B\subset T_r(A)\}
\ee
where $T_r(U):=\{x \in Z\colon d^Z(x,U)<r\}$.

Clearly, one may estimate the Hausdorff distance $d_{GH}((X_1, d_1), (X_2, d_2))$ from above
by constructing a common space $(Z,d^Z)$ with distance preserving embeddings of $X_1$
and $X_2$. For instance, we may estimate the distance to a single point space, $(\{p\}, 0)$:
\be
d_{GH}((\sphere^2, d_\E), (\{p\},0) ) \le d_{H}^{\E^3}(\varphi(\sphere^2), \bar{0}) \le 1
\ee
taking $\varphi: \sphere^2 \to \E^3$ to be the standard embedding. However, 
$(\sphere^2, d_{\sphere^2})$ does not have a distance preserving map into $\E^3$.  

Given a metric space, $(X, d)$, let $N_X(r)$ be the maximum number
of disjoint balls of radius $r$ in $X$.    
Gromov's Compactness Theorem states that any sequence of
metric spaces, $X_j$, with a uniform upper bound, $N_{X_j}(r)\le N(r)$,
for all $r$, has a subsequence $X_{j_i}$ which converges in the
Gromov-Hausdorff sense to a compact metric space.  He proved the
converse as well: if compact $X_j$ converges to a compact $X$ in the Gromov-Hausdorff sense, 
then there is uniform upper bound, $N_{X_j}(r)\le N(r)$ \cite{Gromov-1981}. 
As a final remark, if a metric space $X$ is the Gromov-Hausdorff
limit of geodesic spaces, then $X$ is a geodesic space as well (\cite{Gromov-1981}).  
\subsection{Review of Integral Current Spaces}\label{sec:backgroundint}
We now provide an intuitive description of integral current spaces
which were first defined in \cite{SW-JDG} based upon work of
Ambrosio-Kirchheim in \cite{AK}.

An {\emph{$m$-dimensional integral current space}} $(X,d,T)$ is a metric 
space $(X,d)$ equipped with an $m$-dimensional integral current $T$. For the 
special case where $X$ is an oriented smooth $n$-dimensional manifold and $d$ is
a Lipschitz distance function on $X$, then $(X,d)$ has a cannonical $m$-dimensional 
integral current space structure given by 
\be
T(\omega) = \int_M \omega
\ee
for any $m$-form $\omega$ on $X$. For instance, 
$\left(\sphere^m, d_{\sphere^m}, \int_{\sphere}\right)$ and 
$\left(\sphere^m, d_{\E^{m+1}}, \int_{\sphere}\right)$ are both integral current spaces.

When $X$ is only a metric space, however, there is no general notion of a smooth differential
form. To define an integral current structure on such an $X$, one applies the methods 
of DiGeorgi \cite{DeGiorgi} and Ambrosio-Kirchheim \cite{AK} to replace the role of 
smooth $m$-forms $\omega$ by $(m+1)$-tuples $(\pi_0, \pi_1,...,\pi_m)$ of 
Lipschitz functions $\pi_i:X\to\mathbb{R}$. The notion of an integral current acting on 
such tuples is developed in detail in \cite{AK}. So long as $X$ is covered almost 
everywhere by a countable collection of biLipschitz charts $\varphi_i: A_i \subset \R^m \to X$,
with disjoint images, then 
\be \label{Back3}
T(\pi_0, \pi_1,...,\pi_m):= \sum_{i=1}^\infty f\circ \varphi_i d(\pi_1\circ\varphi_i)
\wedge \cdots \wedge d(\pi_m\circ\varphi_i)
\ee
defines an integral current on $X$.

The mass, mass measure, and density of an integral current space are defined in 
\cite{SW-JDG}, based upon \cite{AK}. For the purposes of this paper, we will only 
consider cases where $X$ is an oriented $m$-dimensional Riemannian manifold 
or is glued together from such manifolds. For these cases, the mass of $X$ is 
$\mathcal{H}^m(X)$, the mass measure is the $m$-dimensional Hausdorff
measure $\mathcal{H}^m$, and the density is 
\be
\Theta(p) =\liminf_{r\to 0} \frac{ \mathcal{H}^m(B_p(r))}{\omega_m r^m}.
\ee
Since integral current spaces $(X,d,T)$
are defined in \cite{SW-JDG} so that 
\be
X= \set(T):=\{ x\in \bar{X}: \, \Theta(x)>0\},
\ee
we see that only top dimensional regions in spaces created by gluing
together manifolds form part of $X$.   
\subsection{The Intrinsic Flat Distance:}\label{sec:backgroundifdist}
The intrinsic flat distance between two integral current spaces
was defined by the second author and Wenger in \cite{SW-JDG}
imitating Gromov's definition of the Gromov-Hausdorff distance:
\be
d_{\mathcal{F}}\big((X_1, d_1, T_1), (X_2, d_2, T_2)\big) =\inf
d_F^Z\big(\varphi_{1\#}(T_1), \varphi_{2\#}(T_2)\big)
\ee 
where the infimum of the flat distance, $d_F^Z$,
is taken over all integral current spaces $Z$
and all distance preserving maps $\varphi_i:X_i \to Z$.
Here $\varphi_\#(T)$ is the pushforward of the current
structure to a current on $Z$ defined by
\be
\varphi_\#T(\omega)=T(\varphi^*\omega)
\ee
where
\be
\varphi^*(\pi_0, \pi_1, ..., \pi_m) =(\pi_0\circ \varphi, ..., \pi_m\circ\varphi).
\ee
Recall the flat distance between two currents on an integral current space $Z$, is
\be
d_F^Z(\varphi_{1\#}(T_1), \varphi_{2\#}(T_2))
= \inf\{ \mass(A) + \mass(B):\, A +\partial B = \varphi_{1\#}(T_1)- \varphi_{2\#}(T_2) \}
\ee
where $A$ is an $m$-dimensional integral current on $Z$ and $B$
is $(m+1)$-dimensional.

To estimate the intrinsic flat distance between two $m$-dimensional 
integral current spaces, one constructs a metric
space $Z$ which is glued together from pieces which are 
biLipschitz $(m+1)$-dimensional manifolds $B_i$, such that
\be
B= \bigcup B_i \textrm{ where }
\bigcup_i \partial B_i = \varphi_1(X_1) \cup \varphi(X_2) \cup A.
\ee
Here $A$ consists of the leftover pieces of boundary. If this is done in an oriented way so that 
\be\label{E:bg-IF-estimate-1}
\int_{\varphi_1(X_1)}\omega
-\int_{ \varphi_2(X_2)}\omega
=\int_{\partial B}\omega + \int_A\omega=
\int_B d\omega + \int_A \omega, 
\ee
then, since $B$ and $A$ have weight one, we have
\be\label{E:bg-IF-estimate-2}
d_{\mathcal{F}}((X_1, d_1, T_1), (X_2, d_2, T_2)) \le \mathcal{H}^n(A)
+\mathcal{H}^{n+1}(B).
\ee

For example, if we wish to estimate the intrinsic flat distance between 
$\left(\sphere^2, d_{\sphere^2},\int_{\sphere^2}\right)$ and 
$\left(\sphere^2, d_{\E},\int_{\sphere^2}\right)$,
then we can consider the metric space $Z'$ defined in (\ref{Back7}) to see that
\be
d_{\mathcal{F}}\left(\left(\sphere^2, d_{\sphere^2},\int_{\sphere^2}\right),
\left(\sphere^2, d_{\E},\int_{\sphere^2}\right)\right) \le 
\mathcal{H}^3(D^3)+ \mathcal{H}^3(\sphere^3_+) = \pi + 2\pi.
\ee
As in the Gromov-Hausdorff setting, it may be difficult to construct integral current spaces $Z$ 
with distance preserving maps $\varphi_i: (X_i, d_i) \to Z$, $i=1,2$, that give
a sharp estimate on the distance between the spaces being considered.

The intrinsic flat distance between two integral current spaces is $0$ if and only if
there is a current preserving isometry between them, see \cite{SW-JDG}. Thus, for example, 
\be
d_{\mathcal{F}}\left(\left(\sphere^m, d_{\sphere^m}, \int_{\sphere^m}\right),
\left(\sphere^m, d_{\E}, \int_{\sphere^m}\right)\right)>0.
\ee 

For a sequence of integral current spaces, the Gromov-Hausdorff and Intrinsic 
Flat limits do not necessarilly agree. However, if a sequence
of integral current spaces converges in the Gromov-Hausdorff sense
then a subsequence converges in the intrinsic flat sense to a subset
of the Gromov-Hausdorff limit, see \cite{SW-CVPDE}. For example,
the pair of spheres joined by increasingly thin tunnels converges
in the Gromov-Hausdorff sense to a pair of spheres joined by a thread,
$Y$ of \cite{SW-JDG}. The same sequence converges in the intrinsic flat sense to the
pair of spheres with the thread removed $M'_\infty$.
\subsection{The Gromov-Lawson construction}
A key ingredient in the proof of Theorem \ref{T:mainresult}
is the construction of particular Riemannian metrics with positive scalar curvature on the
cylinder $S^{m-1}\times[0,1]$ for $m\geq3$.
These metrics smoothly transition between a small round metric on $S^{m-1}\times\{1\}$
and the boundary of a ball in an arbitrary Riemanian 
manifold of positive scalar curvature on $S^{m-1}\times\{0\}$. 
Gluing two such cylinders together along the round boundary components,
one obtains a long and thin tunnel -- called {\emph{Gromov-Lawson}} tunnels -- 
transitioning between two balls in positive scalar curvature manifolds.
This allows one, for instance, to produce metrics of positive scalar curvature on the 
connected sum of manifolds with positive scalar curvature. 
Although the original construction is due to Gromov-Lawson \cite{GL80},
we will require a recent generalization due to Dodziuk \cite{Dod18}. 

For our constructions in Sections \ref{sec:revisedpipe} and \ref{sec:construction},
we will require a detailed description of the construction when applied
to manifolds which have regions isometric to geodesic balls in the standard sphere. 
For such an $m$-dimensional positive scalar curvature manifold $(M^m,g_M)$,
fix two points $p,q\in M$ and choose a radius $\rho>0$ so that 
the geodesic balls $B^M_\rho(p)$ and $B^M_\rho(q)$
are disjoint and isometric to geodesic balls in the standard sphere. 
Topologically, one can perform surgery on the points $\{p,q\}$ by removing the balls
and attaching the cylinder $U:=\sphere^{m-1}\times[0,1]$
to the resulting geodesic sphere to obtain 
\[
M\#(\sphere^{m-1}\times \sphere^1)=
\left(M\setminus (B_\rho(p)\sqcup B_\rho(q))\right)\cup U.
\]
The Gromov-Lawson construction provides a Riemannian metric on $U$
which smoothly attaches to $g_M$ on $M\setminus (B_\rho(p)\sqcup B_\rho(q))$,
producing a {\emph{positive scalar curvature metric}} on 
$M\#(\sphere^{m-1}\times\sphere^1)$. In \cite{Dod18}, 
a refinement of the construction is given in order to obtain a qualitative description
of the geometry of the metric on $U$.
\bt\label{T:GLtunnel}\cite[Proposition 1]{Dod18}
Let $p,q\in (M^m,g_M)$ and $\rho>0$ be as above, and let $L>0$. There is a 
number $\rho_0\in(0,\rho)$ and a Riemannian metric $\bar g=\bar g_{\rho_0,L}$ on 
$U=U_{\rho_0,L}$ satisfying the following:
\begin{enumerate}
\item $\bar g$ has positive scalar curvature;
\item $\bar{g}$ and $g_M$ can be glued together to form a smooth metric on 
\[
M\setminus(B_p(\rho)\sqcup B_q(\rho))\cup U;
\]
\item $\mathrm{dist}_H(\sphere^{m-1}\times\{0\},\sphere^{m-1}\times\{1\})=L$ and $\mathrm{diam}(U)=\mathcal{O}(L)$;
\item $\vol(U)=\mathcal{O}(L\rho_0^{m-1})$;
\item if $\gamma\subset U$ is a continuous curve connecting the two boundary 
components of $U$, then the tubular neighborhood of $\gamma$ of radius
$2\pi \rho$ contains $U$ i.e.
\[
U\subset\{x\in U\colon\mathrm{dist}_{\bar{g}}(x,\gamma)\leq2\pi\rho\}.
\]
\end{enumerate}
Moreover, $\rho_0$ can be chosen arbitrarily small and the estimates in items $(3)$ and $(4)$
depend only on the geometry of $B_p(\rho)$ and $B_q(\rho)$.
\et
In \cite{Dod18}, Theorem \ref{T:GLtunnel} is proven by an explicit construction 
of a Riemannian metric $\bar g$ satisfying conditions $(1)$ through $(5)$ above. 
We give a qualitative description of this metric.

\begin{rmrk}\label{rmk:gromovlawson}
For given parameters $\rho_0\in(0,\rho)$ and $L>0$, there exists
a number $L'$ and a function $r_{\rho_0,L}:[\frac{-L'}{2},\frac{L'}{2}]\to(0,\rho]$
so that
\be
U_{\rho_0,L}=\{(t,x)\in{\Big{[}}\frac{-L'}{2},\frac{L'}{2}{\Big{]}}\times\mathbb{R}^m
	\colon |x|=r(t)\}\subset\mathbb{R}^{m+1}
\ee
and the metric $\bar g_{\rho_0,L}$ in Theorem \ref{T:GLtunnel} is the restriction of the flat metric
on $[\frac{-L'}{2},\frac{L'}{2}]\times\mathbb{R}^m$.
Notice that $L'<L$, but the length of the graph of $r_{\rho_0,L}$ is $L$.

For clarity, there are unit spheres on the $t$-axis equipped with geodesic balls,
$B_{\rho_0}^-\subset \partial B^{\mathbb{R}^{m+1}}_{-y_0}(1)$ and 
$B_{\rho_0}^+\subset \partial B^{\mathbb{R}^{m+1}}_{y_0}(1)$, so that 
\be
\left(\partial B^{\mathbb{R}^{n+1}}_{-y_0}(1)\setminus B_{\rho_0}^-\right)\cup U_{\rho_0,L}\cup
\left(\partial B^{\mathbb{R}^{n+1}}_{y_0}(1)\setminus B_{\rho_0}^+\right)
\ee
is a smooth hypersurface of $\mathbb{R}^{m+1}$. Notice that $\bar{g}$ is rotationally 
symmetric in the sense that the natural action of $SO(m)$ is isometric.
\end{rmrk}

\section{Revised Pipe Filling Technique in Positive Scalar Curvature}\label{sec:revisedpipe}
In this section, we state and prove our main technical result, Theorem \ref{T:pipefilling}.
In it, we clarify and expand upon the \emph{pipe filling technique} originally 
introduced by the third named author in the appendix to \cite{SW-JDG}. 
More details are given here that could be used to clarify the
construction in that appendix, as well. To write a detailed proof and to incorperate
the positive scalar curvature condition, we change the technique somewhat.  

Let us introduce the setting for the pipe filling technique.
For $m\geq3$, let $(N^m,g)$ be an oriented closed $m$-dimensional Riemannian manifold
with positive scalar curvature. Suppose we are given points $p,q\in N$ and a radius 
$\rho\leq\mathrm{inj}^N$ so that $B^N_p(\rho)$ and $B^N_q(\rho)$ are isometric to
geodesic balls in the unit sphere $\sphere^m$. We also fix a length $L>0$.
According to Theorem \ref{T:GLtunnel}, there is number $\rho_0\in(0,\rho)$
and a manifold $(U_{\rho_0,L},\bar g)$ which is topologically $\sphere^{m-1}\times [0,1]$,
has positive scalar curvature, and smoothly glues to $N$ to form
\be\label{eq:Nrho}
N_\rho:=N\setminus(B^N_p(\rho)\sqcup B^N_q(\rho))\cup U_{\rho_0,L}.
\ee
Denote the corrisponding integral current space by 
$\mathcal{N}_\rho=(N_\rho,d_{N_\rho},\int_\rho)$.

Following Remark \ref{rmk:gromovlawson}, there is a function 
$r_{\rho_0,L}:[\frac{-L'}{2},\frac{L'}{2}]\to[0,\rho]$ which, upon properly rotating 
its graph, produces the Gromov-Lawson tunnel $(U_{\rho_0,L},\bar g)$. We 
emphasize that the function $r_{\rho_0,L}$
depends only on $\rho_0,L$ and is independant of the ambient geometry of $(N,g)$.

We also let $N_0$ denote the space obtained by joining $p$ and $q$ by a line segment
\be\label{eq:Nzero}
N_0:=N\sqcup_{p\sim0,q\sim L}[0,L].
\ee
Denote the corrisponding integral current space by 
$\mathcal{N}_0=(N_0,d_{N_0},\int_{\sphere^m})$.

\bt\label{T:pipefilling} 
Let $(N^m,g)$ an oriented closed $m$-dimensional Riemannian manifold
with positive scalar curvature. Let $p,q\in N$ and
$\rho\in(0,\mathrm{inj}^N)$ be so that $B^N_\rho(p)$ and $B^N_\rho(q)$
are isometric to a spherical geodesic ball $B^\sphere_\rho$. Fix $L>0$ and
let $\mathcal{N}_0$ and $\mathcal{N}_\rho$
be the resulting integral current spaces described above in (\ref{eq:Nrho}) and (\ref{eq:Nzero}).

Then there is a constant $C>0$, continuously depending only on 
$L$, $\mathrm{Vol}(N)$, and $\mathrm{diam}(N)$, so that
\be\label{E:pipfillingIFestimate}
	d_\Fm\big(\mathcal{N}_0,\mathcal{N}_\rho\big)\leq C\cdot\rho
\ee
and
\be\label{E:pipfillingGHestimate} 
	d_{GH}\big((N_0, d_{N_0}), (N_\rho, d_{N_\rho})\big) \leq C\cdot\rho.
\ee
\et
The remainder of this section is devoted to the proof of Theorem \ref{T:pipefilling}.

\subsection{Constructing the common space}
We begin by constructing an integral current space 
$Z_\rho''$ with boundary $N_0\sqcup -N_\rho$ which we will use to
estimate the flat and Gromov-Hausdorff distances. To begin, consider the product
\[
Z_\rho:=\left(N\setminus (B^N_p(\rho)\sqcup B_q^N(\rho)\right)\times[-h_0,\rho+h]
\]
where $h_0$ and $h$ are given by
\be\label{choose-h}
h= \sqrt{2\rho\diam(N) - \rho^2\,}
\ee
\be\label{choose-h_0}
h_0 =\sqrt{2\pi\rho \diam(N) +8\rho}.
\ee
The next step is to produce a space $A_{\rho_0,L}$ which will glue into $Z_\rho$ along its boundary.

Following Remark \ref{rmk:gromovlawson}, the Gromov Lawson tunnel $U_{\rho_0,L}$
can be viewed as a hypersurface in $\mathbb{R}\times\mathbb{R}^n$ which 
transitions between two unit spheres equipped with spherical geodesic balls 
\[
B_{\rho_0}^-\subset \partial B^{\mathbb{R}^{m+1}}_{-y_0}(1), 
B_{\rho_0}^+\subset \partial B^{\mathbb{R}^{m+1}}_{y_0}(1)
\subset\mathbb{R}\times\mathbb{R}^m.
\]

The space $A_{\rho_0,L}$, pictured in Figure \ref{fig-tunnel-wall}, will be 
constructed in three portions 
\[
A_{\rho_0,L}=A_{1}\cup A_{2}\cup A_{3}.
\]
The first piece is a product of the Gromov-Lawson tunnel with the segment $[-h_0,0]$,
\begin{align}
A_{1}=\Big{\{}(t,x,s)\in[\frac{-L'}{2},\frac{L'}{2}]\times\mathbb{R}^{n}\times[-h_0,0]
	\colon |x|=r_{\rho_0,L}(t)\Big{\}}\notag.
\end{align}

The second piece consists of two parts: half of a rotated 
Gromov-Lawson tunnel we call the {\emph{pipe}}, $P$, and a cuspoidal region, 
$W$ as in Figure~\ref{fig-tunnel-wall}. More precisely, 
\be
A_{2}=P \cup W
\ee
where 
\begin{align}
P=\Big{\{}(t,x,s)\in\Big{[}\frac{-L'}{2},\frac{L'}{2}\Big{]}\times\mathbb{R}^{m}
	\times[0,\rho]&\colon |x|^2+s^2=r_{\rho_0,L}(t)^2\Big{\}}\notag\\
{}&-\left(B^{\mathbb{R}^{m+1}}_{-y_0}(1)\cup B^{\mathbb{R}^{m+1}}_{y_0}(1)\right)\times[0,\rho]\notag
\end{align}
and
\begin{align}
W=\left(B_{\rho_0}^-\cup B_{\rho_0}^+\right)\times[0,\rho]&{}\notag\\
-\{(t,x,s)\in&\Big{[}\frac{-L'}{2},\frac{L'}{2}\Big{]}\times\mathbb{R}^{m}\times[0,\rho+h]\colon
	|x|^2+s^2\leq r_{\rho_0,L}(t)^2\}.
\end{align}
Notice that the union $A_1\cup A_2$ is a $C^1$-hypersurface of 
$[\frac{-L'}{2},\frac{L'}{2}]\times\mathbb{R}^{n}\times[-h_0,\rho]$.

\begin{figure}[h!]
\includegraphics[scale=.4]{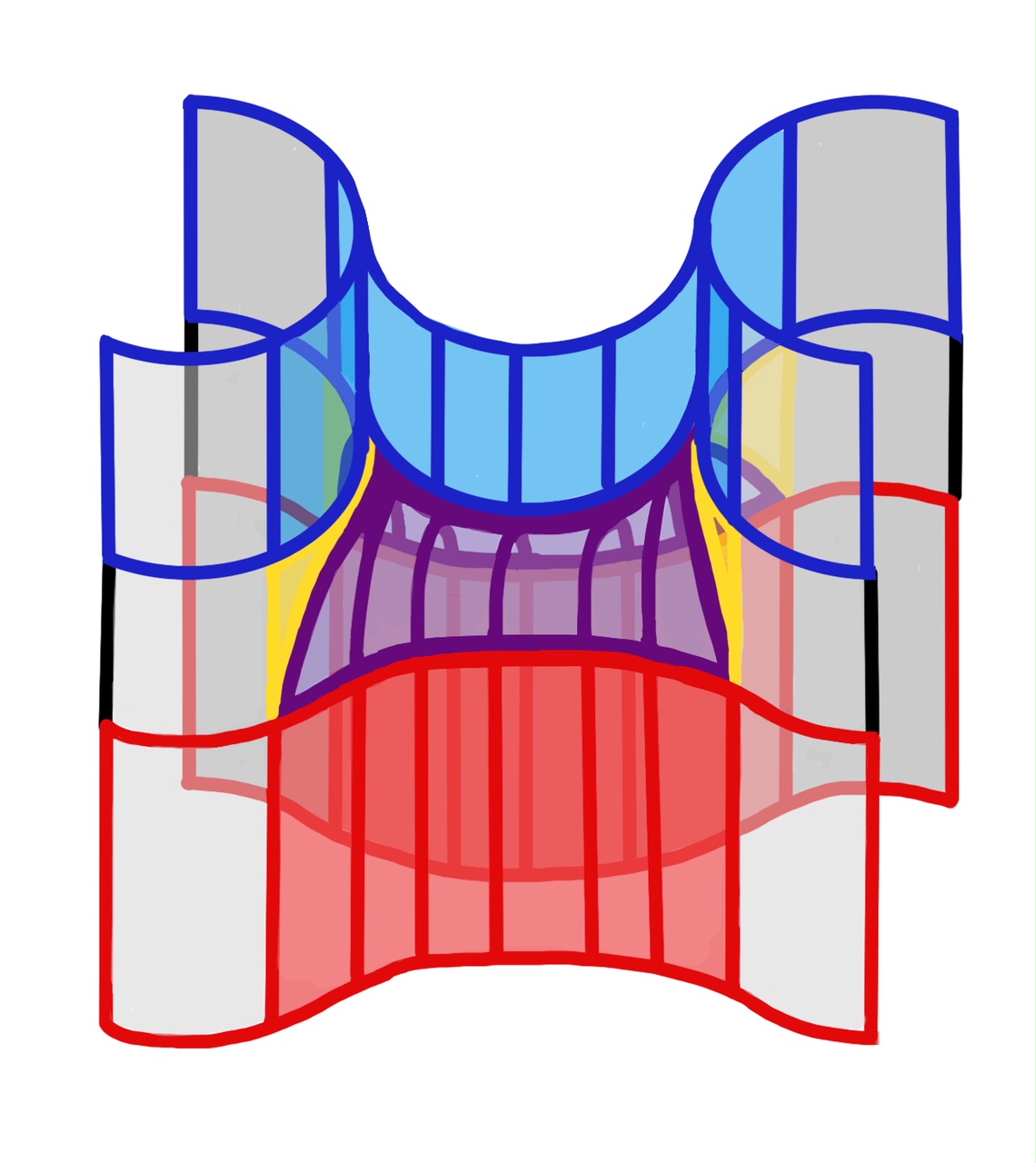}
\thicklines
\put(-250,70){$A_1$}
\put(-235,74){\line(1,0){110}}
\put(-250,120){$P$}
\put(-237,124){\line(1,0){112}}
\put(-250,155){$A_3$}
\put(-235,160){\line(1,0){110}}
\put(-17,50){$W$}
\put(-20,60){\line(-5,2){147}}
\put(-20,60){\line(-2,3){48}}
\put(-10,77){$-h_0$}
\put(-10,143){$0$}
\put(-10,182){$\rho$}
\put(-10,228){$\rho+h_0$}
\caption{Schematic depiction of the region $A_{\rho_0,L}$.}
\label{fig-tunnel-wall}
\end{figure}

The third and final piece, $A_3$, is given by
\be
A_{3}=\left(B_{\rho_0}^-\cup[0,L]\cup B_{\rho_0}^+\right)\times[\rho,\rho+h]
\ee
where the line segment $[0,L]$ is attached to the centers of $B_{\rho_0}^-$
and $B_{\rho_0}^+$. Notice that the subset 
\[
\left(B_{\rho_0}^-\cup[0,L]\cup B_{\rho_0}^+\right)\times\{\rho\}\subset A_3
\]
may be isometrically identified with the top portion of $A_2$ so that the 
union $A_{\rho_0,L}=A_1\cup A_2\cup A_3$ is a well-defined integral current space.

Finally, a portion of the boundary of $A_{\rho_0,L}$ can be isometrically identified with 
$(\partial B^N_{p}(\rho)\sqcup \partial B^N_q(\rho))\times[-h_0,\rho+h]$.
Gluing along this edge, we form the integral current space 
$Z''_\rho=Z_\rho\cup A_{\rho_0,L}$.

Notice that there is a (non-continuous) height function $H:Z''_\rho\to[-h_0,\rho+h]$
and that the hypersurfaces $H^{-1}\{-h_0\}$ and $H^{-1}\{\rho+h\}$ can be identified
with $-N_0$ and $N_\rho$, repsectively. Let $\phi_0:N_0\to Z''_\rho$
and $\phi_\rho:N_\rho\to Z''_\rho$ denote the corresponding inclusions. 
We also denote $Z_\rho':=H^{-1}([0,\rho+h])$.
Using the volume estimate
of Theorem \ref{T:GLtunnel} and Remark \ref{rmk:gromovlawson}, there is
a constant $C>0$, independant of $L$ and $\rho_0$, so that
\begin{equation}\label{eq:Pvol}
\vol(P)\leq CL\rho_0^m.
\end{equation}

\subsection{The embeddings $\varphi_0$ and $\varphi_\rho$ are distance preserving}
Here we prove the $N_0$ and $N_\rho$ embed
into $Z''_\rho$ by distance preserving maps, see Lemmas~\ref{L:pipefillisomemb1} 
and~\ref{L:pipefillisomemb2}.   
\bl\label{L:pipefillisomemb1}
The embedding $\varphi_0: N_0 \to Z''_\rho$
is distance preserving. 
\el
\begin{proof}
Assume on the contrary that there exists points $x_1,x_2 \in N_0$ such that
\be 
d_{Z''_\rho}(\varphi_0(x_1), \varphi_0(x_2))< d_{N_0}(x_1,x_2).
\ee  Then
there exists an arclength parametrized curve $\sigma:[0,D]\to Z''_\rho$
such that 
\be
L(\sigma)=D < d_{N_0}(x_1,x_2), \,\,\,\sigma(0)=x_1, \textrm{ and }\sigma(D)=x_2.
\ee
For ease of notation, we will use $\sigma$ to denote both the parameterization and its image.
The remainder of the proof will be broken into claims.
\begin{claim}\label{C:pipefillisomemb1-1}
$\sigma$ intersects the set
\be
Q:=\partial P\setminus H^{-1}\{0\}.
\ee
\end{claim}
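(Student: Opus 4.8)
The plan is to argue by contradiction on the structure of $Z''_\rho$ near the image of $N_0$. Recall that $\varphi_0(N_0)$ sits at height $H=-h_0$, while the pipe $P$ occupies heights $s\in[0,\rho]$, and the third piece $A_3$ (which contains the line segment $[0,L]$ together with the small spherical caps) sits at heights $s\in[\rho,\rho+h]$. The curve $\sigma$ starts and ends on $\varphi_0(N_0)$ and has length $D<d_{N_0}(x_1,x_2)$. First I would dispose of the trivial case: if $\sigma$ never leaves the ``bottom slab'' $H^{-1}([-h_0,0])$, then it projects to a curve in $N_0$ of no greater length, since on $A_1=U_{\rho_0,L}\times[-h_0,0]$ and on $Z_\rho=(N\setminus(B_p\sqcup B_q))\times[-h_0,\rho+h]$ the vertical $s$-direction is a metric product, so the projection to the $s=-h_0$ slice is $1$-Lipschitz and the projected curve still joins $x_1$ to $x_2$ inside $N_0$. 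That contradicts $D<d_{N_0}(x_1,x_2)$. Hence $\sigma$ must rise to height $s>0$ somewhere.

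Next I would observe that the only way for a curve to leave the bottom slab and reach height $s>0$ is to pass through $H^{-1}\{0\}$ into the region $A_2\cup A_3$, and that the ``collar'' $H^{-1}\{0\}\cap(A_1\cup A_2)$ consists of the tunnel hypersurface $U_{\rho_0,L}\times\{0\}$ glued to the pipe $P$ and the cuspoidal region $W$ along their common edge. The key point is that $A_2=P\cup W$ and the portion of $\partial P$ lying strictly above height $0$, namely $Q=\partial P\setminus H^{-1}\{0\}$, is precisely the ``seam'' through which one must pass to enter the upper region $A_3$ where the segment $[0,L]$ lives. More concretely: the only points of $Z''_\rho$ at height $s\in(0,\rho+h]$ that connect back down to $\varphi_0(N_0)$ are those in $P\cup W\cup A_3$, and since $\sigma$ returns to the bottom at $x_2$, and since $W$ and $A_3$ are ``capped off'' (their boundaries other than the glued seam $Q$ and the top/bottom slices are the small caps $B_{\rho_0}^\pm$ and the glued edge with $Z_\rho$), a topological/connectedness argument shows $\sigma$ cannot get from the bottom slab into $A_3\cup W$ without crossing $Q$. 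I would make this precise by noting that $P$ separates the bottom slab from $W\cup A_3$ within the relevant part of $Z''_\rho$, so any path connecting them crosses $\partial P$; and the crossing cannot happen only at height $0$ (i.e.\ within $H^{-1}\{0\}$) because the part of $\partial P$ at height $0$ is identified with the tunnel boundary, which sits over $Z_\rho$, not under $A_3$ — equivalently, to make vertical progress the curve must enter the interior of $P$ and exit through $Q$.

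The main obstacle I anticipate is making the separation/connectedness assertion rigorous given that $Z''_\rho$ is assembled by gluing several pieces along lower-dimensional edges, so one must be careful about exactly which boundary faces are identified with what. The cleanest way is probably to use the height function $H$ together with the explicit product structures: show that $\sigma$ must attain some height $s_0\in(0,\rho]$; let $t_0$ be the last time before $s_0$ is first exceeded that $\sigma$ is at height $0$ and the first time after; between these times $\sigma$ stays at heights in $(0,\rho]$, hence stays inside $P\cup W$ (the only pieces occupying that height range adjacent to $H^{-1}\{0\}$); and then argue that to re-enter the bottom slab it must pass through $Q$, since the other faces of $\partial P$ are either the height-$0$ slice, the height-$\rho$ slice (leading into $A_3$, not down), or the edge glued to $W$ (a cusp that is also capped off above). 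If needed, I would strengthen this by showing that the subcase where $\sigma$ stays in $W\cup A_3$ forces $\sigma$ to traverse a neighborhood of the segment $[0,L]$, and then handle that in the next claim; but for the present claim it suffices to corner $\sigma$ into crossing $Q$.
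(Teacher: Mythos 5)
Your approach is a topological separation argument, and it has a genuine gap. You assert that the only way for $\sigma$ to leave the slab containing $\varphi_0(N_0)$ and come back is to pass through $A_2\cup A_3$, and that $P$ separates that slab from $W\cup A_3$, so $\sigma$ must cross $Q$. But $Q$ is \emph{not} a separating set in $Z''_\rho$: the product block $Z_\rho=(N\setminus(B_p^N(\rho)\sqcup B_q^N(\rho)))\times[-h_0,\rho+h]$ spans the entire height range, so a curve can start and end on $\varphi_0(N_0)$, go to intermediate heights, and return while staying entirely inside $Z_\rho$, never touching $P$, $W$, $A_3$, or $Q$. Thus no connectedness argument alone can force $\sigma$ to meet $Q$. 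In fact, even the weaker claim that $\sigma$ must enter the pipe $P$ requires an argument, which you do not supply. (A side issue: you have the heights reversed. Despite a typo in the paper's text, the thread space $N_0$ sits at the top slice $H^{-1}\{\rho+h\}$ — that is where $A_3$, containing $[0,L]$, lives — while the tunnel manifold $N_\rho$ sits at $H^{-1}\{-h_0\}$; the paper's own proof, where $\psi_0(x,t)=(x,h+\rho)$, confirms this.)

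The paper closes the gap by a projection-competitor argument rather than a separation argument. After reducing to $Z'_\rho=H^{-1}([0,\rho+h])$, it builds a map $\psi:Z'_\rho\to\varphi_0(N_0)$ that is piecewise defined: on $Z'_\rho\setminus P$ it is vertical projection $\psi_0(x,t)=(x,\rho+h)$, and on the pipe $P\cong\sphere^m_+\times[0,L]$ it is the warped-product projection $\psi_P(y,t)=(t,\rho+h)$ onto the thread. Each piece is $1$-Lipschitz, and $\psi$ is continuous everywhere except precisely on $Q=\partial P\setminus H^{-1}\{0\}$. The punchline is that if $\sigma$ avoided $Q$, then $\psi\circ\sigma$ would be a continuous curve lying in $\varphi_0(N_0)$ with $L(\psi\circ\sigma)\leq L(\sigma)=D<d_{N_0}(x_1,x_2)$, a contradiction. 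This mechanism handles the cases your argument misses: it applies just as well when $\sigma$ stays in $Z_\rho$, when it enters $P$ only through the bottom rim $\partial P\cap H^{-1}\{0\}$, and so on — the conclusion does not depend on $Q$ separating anything, only on $Q$ being the discontinuity set of a length-nonincreasing retraction onto $\varphi_0(N_0)$. To repair your proof you would need to replace the separation step with the construction of such a map (or an equivalent device).
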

\begin{proof}
Inspecting the definition of $Z''_\rho$, there is a distance nonincreasing map from
$H^{-1}([-h_0, 0])$ to $H^{-1}\{0\}$ and so we may assume that $\sigma$ lies in $Z'_\rho$.   

Let $\psi_0: Z'_\rho \setminus P \to \varphi_0(N_0)$ be the
continuous distance nonincreasing map which takes
\be \label{psi_0}
(x,t) \in Z'_\rho\setminus P
\ee
to $\psi_0(x,t)=(x,h+\rho)$. If $\sigma$ avoids the pipe, then
$\psi_0\circ \sigma$ is a curve running between
$\varphi_0(x_1)$ and $\varphi_0(x_2)$ whose length is no greater than $D$
and lies entirely in $\varphi_0(N_0)$, which is a contradiction. It follows that 
$\sigma\cap P\neq\emptyset$.

Now let $\psi_P: P \subset Z'_\rho \to \varphi_0(N_0)$ be 
the continuous distance nonincreasing map defined by sending
\be
(y,t) \in P\cong \sphere^{m}_+ \times [0,L]
\ee 
to a point in the image of the thread: 
\be
\psi_P(y,t)=(t,\rho+h) \in [0,L]\subset\varphi_0(N_0).
\ee

Let $\psi: Z'_\rho \to \varphi_0(N_0)$ be defined so
that $\psi(z)= \psi_0(z)$ for $z \in Z'_\rho \setminus P$ and
$\psi(z)=\psi_P(z)$ for $z\in P$.  This map is continuous everywhere
except on the set $Q$.
If our curve $\sigma$ avoids this set, then
$\psi\circ \sigma$ is a shorter curve between its endpoints  
whose image lies in $\varphi_0(N_0)$
which is a contradiction. This establishes Claim \ref{C:pipefillisomemb1-1}.
\end{proof}
\begin{claim}\label{C:pipefillisomemb1-2}
Let $x\in N$, $y\in \{p, q\}$, and $z \in Q_y$ where $Q_y$ is the component of 
$Q$ closest to $y$.
Then we have
\be
d_{Z'_\rho}(\varphi_0(x),z) \ge d_{Z'_\rho}(\varphi_0(x), (y, h+\rho)).
\ee
\end{claim}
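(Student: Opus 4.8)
The plan is to deduce the inequality from the existence of a well-behaved retraction of the common space onto the top copy of $N_0$. It suffices to produce a continuous retraction $r\colon Z'_\rho\to\varphi_0(N_0)$ --- so that $r$ fixes $\varphi_0(N_0)$ pointwise --- which is distance-nonincreasing for $d_{Z'_\rho}$ and which collapses the component $Q_y$ to the single point $(y,h+\rho)$; granting such an $r$, for any $x\in N$ and $z\in Q_y$ we immediately get $d_{Z'_\rho}(\varphi_0(x),z)\ge d_{Z'_\rho}(r(\varphi_0(x)),r(z))=d_{Z'_\rho}(\varphi_0(x),(y,h+\rho))$. It is worth recording that $(y,h+\rho)$ is exactly $\varphi_0(y)$: in $N_0=N\sqcup_{p\sim 0,\,q\sim L}[0,L]$ the point $y$ is simultaneously the center of the removed spherical ball $B^N_y(\rho)$ and an endpoint of the thread, and in $Z''_\rho$ this point becomes the apex $(y,h+\rho)$ of the small cap $B^{\pm}_{\rho_0}\times\{h+\rho\}$, to which the thread $[0,L]\times\{h+\rho\}$ is attached.

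The natural candidate for $r$ is the map already used in the proof of Claim~\ref{C:pipefillisomemb1-1}: off the closed pipe, the retraction $\psi_0\colon Z'_\rho\setminus P\to\varphi_0(N_0)$ that forgets the height coordinate; on the pipe, the projection $\psi_P\colon\bar P\to\varphi_0(N_0)$, $(y,t)\mapsto(t,\rho+h)$, of $\bar P\cong\sphere^m_+\times[0,L]$ onto the thread. Both are distance-nonincreasing, $\psi_0$ fixes $\varphi_0(N_0)$, and --- the key point --- $\psi_P$ collapses the end face $Q_y=\sphere^m_+\times\{0\}$ (resp. $\sphere^m_+\times\{L\}$) of the pipe to $(0,\rho+h)=\varphi_0(p)$ (resp. $(L,\rho+h)=\varphi_0(q)$), so that $\psi_P(z)=(y,h+\rho)$ for every $z\in Q_y$. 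Thus the candidate already has the correct boundary values; its only defect is that $(\psi_0,\psi_P)$ is discontinuous along $Q$, the interface between the pipe $P$ and the cuspoidal wall $W$, where $\psi_0$ sends $Q_y$ onto the boundary circle $\partial B^{\pm}_{\rho_0}\times\{h+\rho\}$ of the cap while $\psi_P$ sends it to the center $(y,h+\rho)$ --- a mismatch of size $\approx\rho_0$.

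The step I expect to be the main obstacle is repairing this discontinuity while keeping the map $1$-Lipschitz. I would modify the candidate over the wall $W$, together with the column $A_3=(B^-_{\rho_0}\cup[0,L]\cup B^+_{\rho_0})\times[\rho,\rho+h]$ above the caps, replacing it there by a squashing map onto the cap-and-thread part of $\varphi_0(N_0)$ that agrees with $\psi_P$ on the pipe interface $Q_y$ (so $Q_y$ still maps to $(y,h+\rho)$) and with $\psi_0$ on the remaining boundary of $W\cup A_3$. Checking that the resulting map is distance-nonincreasing is the crux, and this is exactly where the explicit description of $A_{\rho_0,L}$ and the seemingly ad hoc choices of $h$ in (\ref{choose-h}) and $h_0$ in (\ref{choose-h_0}) enter: the height $h$ supplies just enough vertical room to ``unfurl'' the cuspoidal collar over the cap without stretching, and more generally $h,h_0$ are calibrated so that no path through the pipe--wall region, or descending toward $H^{-1}\{0\}$, or looping through the product slab $Z_\rho$, is shorter than the path through $(y,h+\rho)$ --- which is precisely the $1$-Lipschitz bound one needs. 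Once the retraction $r$ is constructed, the displayed chain of inequalities finishes the proof.
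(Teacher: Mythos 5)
The paper proves Claim~\ref{C:pipefillisomemb1-2} by a short, direct Pythagorean computation: setting $k=d_{Z'_\rho}(\varphi_0(x),(y,h+\rho))$ and using the product structure of $Z'_\rho$ away from the pipe, one gets $d_{Z'_\rho}(\varphi_0(x),z)^2\ge (d_N(x,y)-\rho)^2+h^2 = (k-\rho)^2+h^2$, and the choice of $h$ in (\ref{choose-h}) is calibrated precisely so that $(k-\rho)^2+h^2\ge k^2$ for all $k\le\diam(N)$. No retraction onto $\varphi_0(N_0)$ is ever built or needed. Your proposal instead reduces the claim to the existence of a continuous, globally distance-nonincreasing retraction $r\colon Z'_\rho\to\varphi_0(N_0)$ that fixes $\varphi_0(N_0)$ and collapses $Q_y$ to $(y,h+\rho)$.

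The problem is that you never construct $r$; you explicitly flag the ``repair of the discontinuity'' as the main obstacle and then argue only by analogy to the roles of $h$ and $h_0$ elsewhere. That is not a proof of the claim, it is a restatement of the difficulty in a different form. Worse, the object you are asking for is strictly stronger than what is needed: if a $1$-Lipschitz retraction $r\colon Z'_\rho\to\varphi_0(N_0)$ fixing $\varphi_0(N_0)$ existed, then for any competitor curve $\sigma$ the image $r\circ\sigma$ would already be a curve in $\varphi_0(N_0)$ of no greater $d_{Z'_\rho}$-length joining $\varphi_0(x_1)$ to $\varphi_0(x_2)$, which (since the $d_{Z'_\rho}$-length of curves inside $\varphi_0(N_0)$ agrees with their $d_{N_0}$-length) would prove Lemma~\ref{L:pipefillisomemb1} outright, rendering Claims~\ref{C:pipefillisomemb1-1} and~\ref{C:pipefillisomemb1-2} and the competitor-curve argument unnecessary. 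The entire two-claim structure of the paper's proof exists precisely because the natural candidate $\psi$ (equal to $\psi_0$ off $P$ and $\psi_P$ on $P$) is discontinuous along $Q$ and hence not $1$-Lipschitz, and no continuous $1$-Lipschitz fix is supplied. Your argument therefore has a genuine gap at its central step, and the route it proposes, if it could be completed, would be doing considerably more work than the claim requires. The paper's direct estimate is both more elementary and the one that actually closes the argument.
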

\begin{proof}
Let $k=d_{Z'_\rho}(\varphi_0(x), (y, h+\rho))$. Notice that by our
choice of $h$ in (\ref{choose-h}) we have 
\be
h^2= 2\rho\diam(N) -\rho^2 \ge 
2 \rho k -\rho^2=k^2 -(k-\rho)^2.
\ee
Using the product structure of this portion in $Z'_\rho$, we obtain
\begin{eqnarray}
d_{Z'_\rho}(\varphi_0(x),z)^2 &\ge& d_{N}(\varphi_0(x),\psi(z))^2 + (\rho+h-H(z))^2\notag\\
&\ge & (d_N(x,y)-\rho)^2 + h^2 \notag\\
&=&(k-\rho)^2 + h^2 \ge k^2,
\end{eqnarray}
completing the proof of Claim \ref{C:pipefillisomemb1-2}.
\end{proof}

Our next task is to produce a competitor to $\sigma$.
According to Claim \ref{C:pipefillisomemb1-1} we may consider the first and 
final times -- denoted by $L_1$ and $L_2$, respectively -- the path 
$\sigma$ passes through the set $Q$. 
We replace the segment $\sigma([0,L_1])$
by a minimizing $N_0$-geodesic $\gamma_1:[0,1]\to \varphi_0(N_0)$ running 
directly from $C(0)$ to $\psi(\sigma(L_1))$.   
Likewise, we replace the segment $\sigma([L_2, D])$
by a minimizing $N_0$-geodesic $\gamma_2:[2,3]\to\varphi_0(N_0)$ 
running directly from $\psi(\sigma(L_2))$ to $\sigma(D)$. By Claim \ref{C:pipefillisomemb1-2}, 
we know that $L(\gamma_1)\leq L_1$ and $L(\gamma_2)\leq L_2$. 
 
Now we have two possibilities: either $\psi(\sigma(L_2))=\psi(\sigma(L_1))$ or 
$\psi(\sigma(L_2))\neq\psi(\sigma(L_1))$.
If $\psi(\sigma(L_2))=\psi(\sigma(L_1))$, then we may form
the concatination $\gamma=\gamma_1*\gamma_2$ which is
path from $\varphi_0(x_1)$ to $\varphi_0(x_2)$ 
lying entirely within $\varphi_0(N_0)$ and has length $L(\gamma)\leq D$.
This contradicts our assumption and we are done.
Now assume that $\psi(\sigma(L_2))\neq\psi(\sigma(L_1))$. We can consider 
a minimizing $N_0$-geodesic $\gamma_3:[1,2]\to Z''_\rho$ which runs from 
$\psi(\sigma(L_1))$ to
$\psi(\sigma(L_2))$. Notice that $L(\gamma_3)\leq L(\sigma[L_1,L_2])$, 
which can be seen using $\phi$.
It follows that the concatination $\gamma_1*\gamma_3*\gamma_2$
is a path from $\varphi_0(x_1)$ to $\varphi_0(x_2)$ 
lying entirely within $\varphi_0(N_0)$ and has length $L(\gamma)\leq D$, 
also yielding the desired contradiction.
\end{proof}

\bl\label{L:pipefillisomemb2}
The map $\varphi_\rho: N_\rho \to Z''_\rho$ is distance preserving.
\el
\begin{proof}
Assume on the contrary that there exists $x_1,x_2 \in N_\rho$ such that
\be
d_{Z''_\rho}(\varphi(x_1), \varphi(x_2))< d_{N_\rho}(x_1,x_2).
\ee  
Then
there exists an arclength parametrized curve $\sigma:[0,D]\to Z''_\rho$
such that $L(\sigma)=D < d_{N_\rho}(x_1,x_2)$ and 
$\sigma(0)=\varphi(x_1)$ and $\sigma(D)=\varphi(x_2)$. 
As before, we will break the remainder of the proof into smaller claims.
\begin{claim}\label{C:pipefillisomemb2-1}
We may assume that $\sigma\subset H^{-1}[-h_0,\rho)$ and $\sigma$ 
must intersect the set $H^{-1}(0)$.
\end{claim}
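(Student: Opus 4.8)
The plan is to mirror the structure of the proof of Lemma~\ref{L:pipefillisomemb1}, but now working from the side of $N_\rho$ rather than $N_0$. Recall that $Z''_\rho$ consists of the product region $Z_\rho = (N\setminus(B^N_p(\rho)\sqcup B^N_q(\rho)))\times[-h_0,\rho+h]$ glued to the region $A_{\rho_0,L}=A_1\cup A_2\cup A_3$ along its boundary, and that $\varphi_\rho$ identifies $N_\rho$ with the slice $H^{-1}\{-h_0\}$: the part of $N$ sitting at height $-h_0$ together with the bottom of the product $A_1 = U_{\rho_0,L}\times[-h_0,0]$, which is exactly the Gromov-Lawson tunnel at height $-h_0$. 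So the curve $\sigma$ starts and ends in this bottom slice. The first observation is that the cuspoidal piece $W$ and the top piece $A_3$, together with the part of the product region above height $0$ off the pipe, all admit a distance-nonincreasing retraction back down toward height $-h_0$ onto $\varphi_\rho(N_\rho)$: concretely, off the pipe $P$ one can project along the $s$-coordinate, and the definition of $W$ and $A_3$ ensures these pieces retract onto the tunnel $U_{\rho_0,L}$ living at height $0$, which in turn retracts down to height $-h_0$. This should force $\sigma$ into $H^{-1}[-h_0,\rho)$, establishing the containment part of Claim~\ref{C:pipefillisomemb2-1}.

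Next, to see that $\sigma$ must meet $H^{-1}(0)$: the slices $H^{-1}\{-h_0\}$ and, say, $H^{-1}\{t\}$ for $t\in[-h_0,0)$ are all isometric copies of $N_\rho$ (the product structure of $A_1$ and of $Z_\rho$ over $[-h_0,0]$ is genuinely a metric product of $N_\rho$ with an interval, up to the identification of $\partial B$-collars), so if $\sigma$ stayed entirely in $H^{-1}[-h_0,0)$ one could project it vertically to height $-h_0$ and obtain a curve in $\varphi_\rho(N_\rho)$ of length no greater than $L(\sigma)=D$ joining $\varphi_\rho(x_1)$ to $\varphi_\rho(x_2)$, contradicting $D < d_{N_\rho}(x_1,x_2)$. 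Here one must be slightly careful that the metric on $H^{-1}[-h_0,0)$ really is a product so that vertical projection is $1$-Lipschitz; this follows from the explicit product definitions of $Z_\rho$ and $A_1$ (for $A_1$, from Remark~\ref{rmk:gromovlawson}, since $\bar g$ is the flat-metric restriction and $A_1$ just tacks on a flat $[-h_0,0]$ factor). Therefore $\sigma$ enters $H^{-1}\{0\}$, which is the level where the tunnel $A_1$ meets the pipe-plus-wall region $A_2$ and where $Z_\rho$ changes character; this completes Claim~\ref{C:pipefillisomemb2-1}.

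From there the argument should continue exactly as in Lemma~\ref{L:pipefillisomemb1} with the roles of the two slices reversed: once $\sigma$ is known to touch height $0$, one records the first and last times it does so, replaces the initial and final arcs by minimizing $N_\rho$-geodesics in $\varphi_\rho(N_\rho)$ using a distance estimate analogous to Claim~\ref{C:pipefillisomemb1-2} (this is where the choice of $h_0$ in~(\ref{choose-h_0}), rather than $h$, enters, since now we need to descend a height of $h_0$ through the product $A_1$ and the relevant curves in $N_\rho$ may pass near the tunnel mouth), and then either concatenates directly or bridges with a third minimizing geodesic $\gamma_3$ whose length is controlled by the length of the portion of $\sigma$ between the two visits to height $0$. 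In each case one produces a competitor path lying entirely in $\varphi_\rho(N_\rho)$ of length at most $D$, contradicting the assumption.

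I expect the main obstacle to be the analogue of Claim~\ref{C:pipefillisomemb1-2}: verifying that the height $h_0=\sqrt{2\pi\rho\,\diam(N)+8\rho}$ is large enough that a path which dips down into the tunnel/pipe region and comes back cannot beat the straight $N_\rho$-geodesic. The subtlety compared to the $N_0$ side is that distances in $N_\rho$ near the tunnel mouth are governed by the Gromov-Lawson geometry — in particular item~(5) of Theorem~\ref{T:GLtunnel}, which says any curve through the tunnel is within $2\pi\rho$ of the tunnel, so traversing the tunnel costs at most an extra $\mathcal{O}(\rho)$ plus the length $L$ of the graph of $r_{\rho_0,L}$ — and one must combine this with the Pythagorean bound coming from the $[-h_0,0]$ factor, exactly as the constants $2\pi\rho\,\diam(N)$ and $8\rho$ under the radical in~(\ref{choose-h_0}) are designed to absorb. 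Getting this inequality to close cleanly, while keeping track of which component $Q_y$ of the relevant level set a bridging geodesic lands in, is the technical heart of the lemma; everything else is bookkeeping parallel to the already-written Lemma~\ref{L:pipefillisomemb1}.
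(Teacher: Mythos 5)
Your treatment of the second half of the claim — that $\sigma$ must meet $H^{-1}\{0\}$ — matches the paper: over $[-h_0,0]$ the space $H^{-1}[-h_0,0]$ is a genuine metric product, so vertical projection to $H^{-1}\{-h_0\}=\varphi_\rho(N_\rho)$ is $1$-Lipschitz (this is the paper's $\psi_2$), and if $\sigma$ stayed in $H^{-1}[-h_0,0)$ one would project it to a competing curve of length at most $D<d_{N_\rho}(x_1,x_2)$, a contradiction; continuity then forces $\sigma$ to cross $H^{-1}\{0\}$.

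The first half — that we may take $\sigma\subset H^{-1}[-h_0,\rho)$ — is where your argument departs from the paper, and the departure introduces a genuine gap. You assert that the cuspoidal region $W$, the slab $A_3$, and the off-pipe part of $Z_\rho$ above height $0$ all admit a distance-nonincreasing retraction onto $\varphi_\rho(N_\rho)$. For $A_3$ and the off-pipe slab this is vertical projection in a metric product and is fine, but for $W$ it is not: near the cusp where $W$ meets the pipe $P$, the region is not a product with the $s$-interval, and there is no reason a projection ``along $s$'' or ``onto the tunnel at height $0$'' should be $1$-Lipschitz. Indeed, if such a retraction of $W$ existed, the paper's Claim~\ref{C:pipefillisomemb2-2'} --- which replaces the portion of $\sigma$ in $W$ by a path along $\partial W$ at the cost of an additive $8\rho$ --- would be superfluous; the need for that separate step with its error term is precisely the signal that your claimed retraction fails. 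The paper obtains the containment from a much weaker and cleaner observation: only the band $H^{-1}[\rho,\rho+h]$ (covered by the products $(N\setminus(B_p\sqcup B_q))\times[\rho,\rho+h]$ and $A_3$) needs to be collapsed, and since that band is a genuine metric product, the vertical projection $\psi_1(x,t)=(x,\rho)$ is $1$-Lipschitz, so applying it to the part of $\sigma$ above height $\rho$ yields a no-longer competitor inside $H^{-1}[-h_0,\rho)$. Nothing about $W$ or $P$ is used at this stage. The remainder of your proposal (hitting times of $H^{-1}\{0\}$, geodesic replacements, the Pythagorean bound involving $h_0$, the bridging geodesic $\gamma_3$) concerns the rest of Lemma~\ref{L:pipefillisomemb2} rather than this claim, and there it does follow the paper's outline.
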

\begin{proof}
We first notice that we may assume $\sigma\subset H^{-1}[-h_0,\rho)$.   
This can be seen by applying the distance nonincreasing map 
$\psi_1: H^{-1}[\rho, h+\rho] \to H^{-1}(\rho)$ defined by
$\psi_1(x,t)=(x,\rho)$ to any portion of $\sigma$ lying above height $\rho$.   
On the other hand, there is a projection mapping $\psi_2:H^{-1}[-h_0,0]\to H^{-1}(-h_0)$
which is distance nonincreasing due to the product structure of $Z''_\rho$ there.
It follows that $\sigma$ must pass through the set $H^{-1}(0,\rho)$. 
Since $\sigma$ begins and ends at height $-h_0$, $\sigma$ must intersect $H^{-1}(0)$.
\end{proof}

\begin{claim}\label{C:pipefillisomemb2-2'}
The curve $\sigma$ can be replaced by a curve $\sigma':[0,D']\to Z''_\rho$ satisfying the following
\begin{enumerate}
\item $\sigma'(0)=\sigma(0)$ and $\sigma'(D')=\sigma(D)$;
\item $L(\sigma)\geq L(\sigma')-8\rho$;
\item $\sigma'$ does not pass through the interior of the region 
$A_2\subset H^{-1}([0,\rho])$;
\end{enumerate}
\end{claim}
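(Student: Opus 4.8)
The plan is to take the curve $\sigma$ from Lemma~\ref{L:pipefillisomemb2}'s argument and surgically excise the portions lying inside the cuspoidal region $A_2 = P \cup W$, replacing them by short detours along the boundary set $Q = \partial P \setminus H^{-1}\{0\}$, paying a bounded price of at most $8\rho$ in total length. The key geometric observation is that, by item (5) of Theorem~\ref{T:GLtunnel} together with Remark~\ref{rmk:gromovlawson}, any curve connecting the two boundary components of the Gromov-Lawson tunnel is contained in a $2\pi\rho$-tubular neighborhood of itself; more to the point, every point of $A_2$ lies within distance $O(\rho)$ of $Q$. So wherever $\sigma$ dips into the interior of $A_2$, we can cap off that sub-excursion with a path along (or near) $Q$ at cost controlled by $\rho$, using the bound on the number of components of $Q$ (there are exactly two, $Q_p$ and $Q_q$).

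First I would invoke Claim~\ref{C:pipefillisomemb2-1} to assume $\sigma \subset H^{-1}[-h_0,\rho)$ and that $\sigma$ meets $H^{-1}(0)$. Then I would consider the (possibly several) maximal subintervals $[a_i,b_i] \subset [0,D]$ on which $\sigma$ lies in the interior of $A_2$. Since $A_2 \subset H^{-1}([0,\rho])$ and $\sigma$ must enter and leave $A_2$ through its boundary — which, apart from the bottom face $H^{-1}\{0\}$ and the top identification with $A_3$, consists of $Q$ — each endpoint $\sigma(a_i), \sigma(b_i)$ lies on $Q \cup H^{-1}\{0\}$. On each such subinterval I replace $\sigma|_{[a_i,b_i]}$ by a path that goes from $\sigma(a_i)$ to the relevant component $Q_p$ or $Q_q$, runs along $Q$, and returns to $\sigma(b_i)$; by the tubular-neighborhood estimate the diameter of each component $Q_y$ of $Q$ is $O(\rho)$, and the distance from any point of $\overline{A_2}$ to $Q$ is $O(\rho)$. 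The total cost is then bounded by a universal multiple of $\rho$; after absorbing constants into the choice of $h_0$ in (\ref{choose-h_0}) — which was set up precisely with a $+8\rho$ slack — the surgered curve $\sigma'$ satisfies $L(\sigma') \le L(\sigma) + 8\rho$, and by construction $\sigma'$ avoids the interior of $A_2$ while keeping the same endpoints. A mild technical point is handling the case where $\sigma$ makes infinitely many small excursions into $A_2$; here one argues that all but finitely many contribute negligibly, or reparametrizes so that only the excursions of length $\ge \delta$ matter and lets $\delta \to 0$, noting the total correction stays summably bounded because consecutive excursions share the common ``landing zone'' $Q_p \cup Q_q$.

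\textbf{The main obstacle} I expect is bookkeeping the constant: one must verify that the detours really cost no more than $8\rho$ in aggregate and not, say, a number growing with the number of excursions. The resolution is that we do not detour for each excursion independently — instead, once $\sigma$ first hits the interior of $A_2$ we route $\sigma'$ onto $Q$ (cost $O(\rho)$), keep it on or near $Q$ for the duration during which $\sigma$ wanders through $A_2$ (this segment of $\sigma'$ is no longer than the corresponding segment of $\sigma$, using the distance-nonincreasing retractions already built in the proof of Lemma~\ref{L:pipefillisomemb2} and the product structure), and rejoin $\sigma$ only when it permanently exits $A_2$ (another $O(\rho)$). Since $Q$ has only two components and $\sigma$ is a single curve, the number of ``entry/exit'' transitions that actually matter is bounded, and the total overhead is a fixed universal multiple of $\rho$, which is what the $+8\rho$ in (\ref{choose-h_0}) was chosen to accommodate. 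The remaining items (1) and (3) of the claim are then immediate from the construction: endpoints are untouched and $\sigma'$ lies, by design, in $\big(Z''_\rho \setminus \operatorname{int} A_2\big)$.
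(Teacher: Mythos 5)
Your central geometric lemma is false, and it sinks the $P$-portion of your argument. You claim that ``every point of $A_2$ lies within distance $O(\rho)$ of $Q$,'' and that you can therefore route $\sigma'$ onto $Q$ and keep it there while $\sigma$ wanders through $A_2$. But $Q=\partial P\setminus H^{-1}\{0\}$ consists of the two small end-caps of the half-rotated tunnel, located near $t\approx\pm L'/2$, while the pipe $P$ itself has length on the order of $L$, which is not comparable to $\rho$ (in the application $L$ is roughly the Euclidean distance between two points on the sphere). Points in the middle of $P$ are at distance $\sim L/2$ from $Q$, not $O(\rho)$. This also cannot be deduced from item~$(5)$ of Theorem~\ref{T:GLtunnel}: that item bounds the distance from points of $U$ to \emph{any curve $\gamma$ crossing the tunnel} (i.e.\ the tunnel is thin around such a $\gamma$), not the distance from points of $U$ to the tunnel's ends. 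Consequently, if $\sigma$ enters $P$ near $Q_p$, traverses the pipe, and exits near $Q_q$, your plan of keeping $\sigma'$ ``on or near $Q$'' stalls at one end while $\sigma$ has moved to the other; there is no distance-nonincreasing retraction of $P$ onto $Q$, and the projections $\psi_1,\psi_2$ from Lemma~\ref{L:pipefillisomemb2} are vertical projections unrelated to $Q$, so invoking them here does not help.

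The paper treats $W$ and $P$ by genuinely different mechanisms, and the $8\rho$ comes entirely from the $W$-part. For $W$ one routes along $\partial W$ at a cost bounded by $\diam(\partial W)<4\rho+\pi\rho<8\rho$; your handling of $W$ is essentially this and is fine. For $P$, however, the paper does not pay anything: after arranging that the first and last times $\bar\sigma$ meets $P$ occur on the equatorial slice $H^{-1}\{0\}\cap P$, one uses the rotational symmetry of the half-rotated tunnel (equivalently, the reflection isometry of the full rotation across $\{s=0\}$) to conclude that the intrinsic distance in $P$ between two points of the equatorial slice is realized by a path \emph{inside} the equatorial slice. Replacing the $P$-excursion by that path gives $\sigma'$ of no greater length and avoiding $\operatorname{int}P$. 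This symmetry argument is the missing ingredient in your proposal: the detour for $P$ must go through $H^{-1}\{0\}\cap P$, not through $Q$, and its being free is a consequence of symmetry, not of any $O(\rho)$ proximity estimate.
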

\begin{proof}
If $\sigma$ passes through $W$, we replace this segment of $\sigma$ by a path lying entirely within
$\partial W$. Since the diameter of $\partial W$ is less than $4\rho+\pi\rho<8\rho$, this replacement yields a curve $\bar{\sigma}$ with $L(\sigma)\geq L(\bar{\sigma})-8\rho$.

Finally, suppose $\bar{\sigma}$ intersects the pipe $P$. By perhaps altering 
$\bar \sigma$, but preserving 
its length bound, we can assume that the first and final times $\bar \sigma$ passes through $P$
occur on $H^{-1}\{0\}\cap P$. Now due to the rotational symmetry of $P$,
 the distance between two points in the
equitorial slice $H^{-1}\{0\}$ can be realized by a path contained in it. It follows that 
$\bar \sigma$ may be replaced 
by a curve $\sigma'$ of no greater length which does not pass through the interior of $P$.
\end{proof}
Since $\sigma'$ lies in $H^{-1}[-h_0,\rho)$ and does not pass through the interior of $A_2$,
we may assume that $\sigma'$ lies in $H^{-1}[-h_0,0]$ due to 
the product structure of $H^{-1}[0,\rho)\setminus A_2$. Moreover,
we may assume that $\sigma'\cap H^{-1}\{0\}$ is connected by projecting any portion exiting and
entering $H^{-1}\{0\}$ back up to $H^{-1}\{0\}$.

We proceed in two cases. First suppose the original curve $\sigma$ 
does not pass through the region $A_2$. Since the region 
$H^{-1}[-h_0,\rho]\setminus A_2$ has a product structure,
we can project $\sigma$ down to $H^{-1}\{-h_0\}$ to obtain a shorter curve.

Now suppose $\sigma$ -- and hence $\sigma'$ -- intersects $A_2$. Consider times $L_1$ and $L_2$
which correspond to the first and final times, respectively, when $\sigma'$ passes through 
$H^{-1}(0)$. We will proceed by constructing a competing curve to $\sigma'$.
Let $\gamma_1:[0,1]\to \varphi_\rho(N_\rho)$ be a minimizing $N_\rho$-geodesic
traveling from $\varphi_\rho(x_1)$ to $\psi_2(\sigma'(L_1))$. Likewise, let 
$\gamma_2:[2,3]\to\varphi_\rho(N_\rho)$
be a minimizing $N_\rho$-geodesic traveling from $\psi_2(\sigma'(L_2))$ to $\varphi_\rho(x_2)$.
Using the product structure of $H^{-1}[-h_0,0]$, we immediately obtain the following claim.
\begin{claim}\label{C:pipefillisomemb2-2}
The following inequalities hold:
\begin{equation}\label{E:pipefillisomemb2-2}
L(\sigma'[0,L_1])\geq\sqrt{L(\gamma_1)^2+h_0^2},\quad\quad L(\sigma'[L_2,D])\geq\sqrt{L(\gamma_2)^2+h_0^2}.
\end{equation}
\end{claim}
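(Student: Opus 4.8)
The plan is to exploit the product structure of the region $H^{-1}[-h_0,0]$ inside $Z''_\rho$, which is isometric to $\left(N\setminus(B^N_p(\rho)\sqcup B^N_q(\rho))\right)\times[-h_0,0]$, where the bottom slice $H^{-1}\{-h_0\}$ is identified with $\varphi_\rho(N_\rho)$. The curve $\sigma'$ starts and ends on this bottom slice, so the portion $\sigma'[0,L_1]$ runs from $\varphi_\rho(x_1)$ (at height $-h_0$) to the point $\sigma'(L_1)$ which lies at height $0$, and symmetrically for $\sigma'[L_2,D]$.

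First I would invoke the product metric: for any curve in a Riemannian product $Y\times[-h_0,0]$, the length is bounded below by the length of its projection to $Y$ and also, via the Pythagorean inequality for the two coordinate displacements (or directly by $L(\alpha)\geq\sqrt{d_Y(\mathrm{proj}\,\alpha(0),\mathrm{proj}\,\alpha(1))^2 + (\Delta\text{height})^2}$), we get
\be
L(\sigma'[0,L_1])\geq\sqrt{d_N\bigl(\text{proj}\,\sigma'(0),\,\text{proj}\,\sigma'(L_1)\bigr)^2 + h_0^2}.
\ee
Here $\text{proj}\,\sigma'(0)$ corresponds to $\varphi_\rho(x_1)$ and $\text{proj}\,\sigma'(L_1)$ corresponds to $\psi_2(\sigma'(L_1))$, since $\psi_2(x,t)=(x,-h_0)$ is exactly the projection onto the bottom slice along the product. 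Then $d_N(\varphi_\rho(x_1),\psi_2(\sigma'(L_1)))$ — computed within $\varphi_\rho(N_\rho)\cong N_\rho$ — is precisely $L(\gamma_1)$, since $\gamma_1$ was chosen to be a minimizing $N_\rho$-geodesic between these endpoints. This gives the first inequality; the second follows identically by applying the same argument to $\sigma'[L_2,D]$, whose endpoints are $\psi_2(\sigma'(L_2))$ and $\varphi_\rho(x_2)$, with minimizing geodesic $\gamma_2$ of length $d_N(\psi_2(\sigma'(L_2)),\varphi_\rho(x_2))$.

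The only subtlety — and the step I'd be most careful about — is justifying that the relevant arcs of $\sigma'$ genuinely lie in the product region $H^{-1}[-h_0,0]$ rather than dipping into $A_1$ or crossing $H^{-1}\{0\}$ in between. By the reductions already performed (Claim \ref{C:pipefillisomemb2-2'} and the two sentences after it), $\sigma'$ lies in $H^{-1}[-h_0,0]$, does not enter the interior of $A_2$, and $\sigma'\cap H^{-1}\{0\}$ is connected; since $L_1$ and $L_2$ are the first and last visits to $H^{-1}\{0\}$, the arc $\sigma'[0,L_1]$ stays in $H^{-1}[-h_0,0)\cup\{\sigma'(L_1)\}$ and the arc $\sigma'[L_2,D]$ likewise, both entirely within the product portion $\left(N\setminus(B^N_p(\rho)\sqcup B^N_q(\rho))\right)\times[-h_0,0]$ (the region $A_1$ over this height range is the tunnel piece, which is part of $N_\rho$'s slice and does not obstruct the product structure since $\sigma'$ avoids $A_2$). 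Once this is in place, the two displayed inequalities are immediate from the product-metric estimate above, completing the proof of the claim.
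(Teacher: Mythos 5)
Your proof is correct and takes essentially the same route as the paper, which simply asserts the claim is "immediate" from the product structure of $H^{-1}[-h_0,0]\cong N_\rho\times[-h_0,0]$; you are merely spelling out the standard product-metric estimate $L(\alpha)\geq\sqrt{d_{N_\rho}(\mathrm{proj}\,\alpha(0),\mathrm{proj}\,\alpha(1))^2+(\Delta\text{height})^2}$ and identifying the projected endpoints as the endpoints of the minimizing geodesics $\gamma_1,\gamma_2$. One small gloss: the relevant product is $N_\rho\times[-h_0,0]$ (i.e.\ $A_1$ is part of the base, not an obstruction requiring the $A_2$-avoidance hypothesis as your parenthetical suggests), but this does not affect the argument.
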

Finally, let $\gamma_3:[1,2]\to H^{-1}(-h_0)$ be a minimizing $N_\rho$-geodesic 
running from $\psi_2(\sigma'(L_1))$ to $\psi_2(\sigma'(L_2))$.
Since $\sigma'[L_1,L_2]$ lies entirely in $H^{-1}\{0\}$, the following inequality holds:
\begin{equation}\label{E:pipefillisomemb2-3}
L(\sigma'[L_1,L_2])\geq L(\gamma_3).
\end{equation}

Now, by our choice of $h_0$ in (\ref{choose-h_0}), one can show
\begin{align}
h_0^2&\geq2\mathrm{diam}(N)\rho\pi+\pi^2\rho^2\notag\\
{}&\geq2L(\gamma_1)\rho\pi+\pi^2\rho^2\notag\\
{}&=(L(\gamma_1)+\pi\rho)^2-L(\gamma_1)^2\notag
\end{align}
and it follows that 
\begin{equation}\label{E:pipefillisomemb2-4}
\sqrt{L(\gamma_i)^2+h_0^2}\geq L(\gamma_i)+\rho\pi
\end{equation}
holds for $i=1,2$.
To conclude, we may sum inequalities \ref{E:pipefillisomemb2-2} and \ref{E:pipefillisomemb2-3},
to find
\begin{align}\label{E:pipefillisomemb2-5}
L(\sigma)&\geq L(\sigma')-8\rho\notag\\
{}&\geq \sqrt{L(\gamma_1)^2+h_0^2}+\sqrt{L(\gamma_2)^2+h_0^2}+
	L(\gamma_3)-8\rho\notag\\
{}&\geq L(\gamma_1)+L(\gamma_2)+L(\gamma_3)\notag.
\end{align}
This shows that the concatination $\gamma=\gamma_1*\gamma_3*\gamma_2$ is a 
competitor to $\sigma$ of no greater length 
which lies entirely in the embedding $\varphi_\rho(N_\rho)$, yielding a contradiction.
\end{proof}

\subsection{Proving the pipe filling theorem}
Now that we have confirmed that the integral current spaces $\mathcal{N}_0$ and 
$\mathcal{N}_\rho$ have distance-preseving embeddings into the boundary of $Z''_\rho$,
the proof of Theorem \ref{T:pipefilling} will be complete upon estimating the volume
of $Z''_\rho$.
\begin{proof}[Proof of Theorem~\ref{T:pipefilling}.]
Let $B_\rho$ denote the current on $Z''_\rho$ defined by integration on its
$m$-dimensional stratum.
According to Lemmas~\ref{L:pipefillisomemb1} and~\ref{L:pipefillisomemb2},
the integral current space $(\bar{Z}, d_{Z''_\rho}|_{\bar{Z}\times\bar{Z}},B_\rho)$
with embeddings $\varphi_0$ and $\varphi_\rho$ is an admissable set of data
with which to estimate the intrinsic flat distance between $\mathcal{N}_0$ and $\mathcal{N}_\rho$.
In other words,
\be
	d_{\mathcal{F}}(\mathcal{N}_0,\mathcal{N}_\rho)\leq\mass(B_\rho) = \vol(\bar{Z}).
\ee

We will estimate the volume of $\bar{Z}$ by inspecting the three peices in its construction
\[
\vol(\bar{Z})=\vol(N_\rho \times [-h_0,0]) + \vol(Z_\rho) + \vol(Z'_\rho).
\]

By the volume estimate in Theorem \ref{T:GLtunnel}, we can estimate
\begin{align*}
	\vol(N_\rho \times [-h_0,0]) &= h_0 \cdot \vol(N_\rho)  \\
		&= h_0 \left( \vol(N) - 2\vol(B_p(\rho)) +\vol(P\cap H^{-1}(0))) \right) \\
		&\le h_0 \left( \vol(N) + C_1L \rho^m \right)
\end{align*}
and
\begin{align*}
	\vol(Z_\rho) &\leq \vol(N \times [0,\rho]) + \vol(P) \\
		&\le \rho \vol(N) +C_2L\rho^m
\end{align*}
where $C_1,C_2>0$ come from Theorem \ref{T:GLtunnel} and estimate (\ref{eq:Pvol}) and are 
independant of $\rho$ and $L$.
The final piece of $\bar{Z}$ evidently has volume 
\[
\vol(Z'_\rho) \leq \vol(N \times [\rho,\rho+h]) = h \vol(N).
\]
Summing the above three inequalities, we obtain the estimate \ref{E:pipfillingIFestimate}.

Finally, we consider the Gromov-Hausdorff distance between $N_\rho$ and $N_0$. 
Since the maps $\varphi_\rho: N_\rho \to Z''_\rho$ and $\varphi_0: N_0 \to Z''_\rho$ are 
distance-preserving,
\be
d_{GH}(N_\rho,N_0) \le d_H^{Z''_\rho}(\varphi_\rho(N_\rho),\varphi_0(N_0)).
\ee	
By considering the lengths of verticle paths running between $H^{-1}(\rho+h)$
and $H^{-1}(-h_0)$, it follows that 
\[
 d_H^{Z''_\rho}(\varphi_\rho(N_\rho),\varphi_0(N_0))\leq h_0+2\pi\rho+h,
\]
where we have used part $(5)$ of Theorem \ref{T:GLtunnel}. The 
inequality (\ref{E:pipfillingGHestimate}) follows.
\end{proof}

\section{Constructing spheres with tunnels and spheres with threads}\label{sec:construction}
In this section we construct the sequences we will use to prove Theorem~\ref{T:mainresult}.
First we construct spheres with
increasingly dense threads, denoted by $(Y_{\varepsilon_j}, d_{Y_{\varepsilon_j}})$,
and spheres with increasingly dense tunnels, denoted by 
$(X_{\varepsilon_j}, d_{X_{\varepsilon_j}})$, see Definitions~\ref{D:defofXepsilon} and 
\ref{D:defofYepsilon}, respectively.   
To $(Y_{\varepsilon_j}, d^{Y_{\varepsilon_j}})$ we associate the integral current space
$(\sphere^m, d_{Y_{\varepsilon_j}}, \int_{\sphere^m})$, 
see Definition~\ref{D:ICSassociatedtoYep},
which no longer contains the threads. The second sequence 
$(X_{\varepsilon_j}, d_{X_{\varepsilon_j}})$ also has
an integral current structure, $T_{\vare_j}$, described in Definition~\ref{D:ICSassociatedtoXep}.
\subsection{The geodesic spaces $(Y_\varepsilon, d_{Y_\varepsilon})$ 
	and $(X_\varepsilon, d_{X_\varepsilon})$}
In this subsection, we fix $\varepsilon>0$ and construct two geodesic spaces: a sphere
with threads $(Y_\varepsilon, d_{Y_\varepsilon})$ and a sphere with tunnels 
$(X_\varepsilon, d_{X_\varepsilon})$. 

We begin by choosing a collection of points $\{p_1,\ldots, p_{N(\varepsilon)}\}$ 
in the sphere $\sphere^m$ so that 
$\{B_{p_i}(\varepsilon)\}_{i=1}^{N(\varepsilon)}$ are pairwise disjoint and
$\{B_{p_i}(2\varepsilon)\}_{i=1}^{N(\varepsilon)}$ 
forms an open cover of $\sphere^m$. The number of points
required to form such a collection, $N(\varepsilon)$, is
on the order of $\sin(\varepsilon)^{-m}$, though we will not need explicit knowledge of it.

Next, for every $\varepsilon>0$ and $i\in\{1,2,\ldots N(\varepsilon)\}$, we will 
choose $N(\varepsilon)-1$ points
which lie on the geodesic sphere $\partial B_{p_i}(\varepsilon)$.
We denote the points by $\{q_j^i\}_{j\in\{1,\ldots,\hat{i},\ldots N(\varepsilon)\}}$
where $\hat{i}$ indicates that the index $i$ is ommitted.
We choose $q_j^i$ sufficiently spaced so that 
\be\label{E:qspacing}
d_{\sphere^m}(q_j^i,q_{j'}^i)>\frac{\varepsilon}{N(\varepsilon)}
\ee
holds for each $j\neq j'$ and all $i$.

We now describe a way to pairing the points $q_j^i$ between different balls
$B_{p_i}(\varepsilon)$ so that we may connect them with line segments or tunnels. For each 
$i \in \{1,\ldots,N(\varepsilon)\}$ and $j \in\{1,\ldots,\hat{i},\ldots,N(\varepsilon)\}$, we pair 
$q_j^i\in\partial B_{p_i}(\varepsilon)$ with $q_i^j\in\partial B_{p_j}(\varepsilon)$.
Let $L_j^i$ be the Euclidean distance between paired points
\be
L_j^i := d_{\E^{m+1}}(q_j^i,q_i^j)=\sqrt{2 -2 \cos\left( d_{\sphere^m}(q_j^i, q_i^j) \right)}.
\ee
Now we may define the {\emph{sphere with threads}}.
\bd\label{D:defofYepsilon} 
Define $Y_\varepsilon$ to be the sphere with attached line segments $[0, L_j^i]$ 
by identifying its boundary to the paired points $\{q_j^i,q_i^j\}$,
\begin{align}\label{E:defofYepsilon}
	Y_\varepsilon &= \sphere^m \bigcup_{i<j}^{N(\varepsilon)} [0,L_j^i]. 
\end{align}
We give $Y_\varepsilon$ the induced length metric, denoted by $d_{Y_\varepsilon}$.
\ed
Next, we construct the {\emph{sphere with tunnels}}.
\bd\label{D:defofXepsilon}
Fix the radius
\[
\rho(\varepsilon)=\frac{\varepsilon}{N(\varepsilon)^{2}}
\]
and notice that inequality (\ref{E:qspacing}) implies the geodesic balls 
$\{B_{q_j^i}(\rho(\varepsilon))\}$ are disjoint.
Define $X_\varepsilon$ by removing the balls $B_{q_j^i}(\rho(\varepsilon))$, 
for each $i\in\{1,\ldots,N(\varepsilon)\}$ 
and $j\in\{1,\ldots,\hat{i},\ldots,N(\varepsilon)\}$, and attaching 
Gromov-Lawson tunnels $U_i^j=S^{m-1}\times[0,1]$ of length $L_j^i$ and radius 
$\rho(\varepsilon)$ along the boundaries of $B_{q_j^i}(\rho(\varepsilon))$
and $B_{q_i^j}(\rho(\varepsilon))$. 

More precisely, we equip $U_i^j$ with the
Gromov-Lawson tunnel obtained by the construction in Section \ref{sec:revisedpipe}.
This yields the Riemannian manifold of positive scalar curvature
\begin{align}\label{E:defofXepsilon}
	X_\varepsilon &:= \left( \sphere^m \setminus \bigcup_{i\neq j}^{N(\varepsilon)}B_{q^i_j}(\rho(\varepsilon)) \right)
		 \bigcup_{i<j}^{N(\varepsilon)} U_i^j. \notag
\end{align}
We denote the induced length metric by $d_{X_\varepsilon}$. 
\ed
\subsection{The integral current spaces $N_\vare$ and $M_\vare$ are defined:} 
We now describe the natural integral current space structures associated to these metric spaces.
 Recall that definition of an integral current space requires that every point have positive 
$m$-dimensional density. 
\bd\label{D:ICSassociatedtoYep}
The integral current we associate to $(Y_\varepsilon,d_{Y_\varepsilon})$ is integration over 
$\sphere^m$ and the integral current space is 
$(\sphere^m, d_{Y_\varepsilon}, \int_{\sphere^m})$, and we will be denoted by 
$N_\varepsilon$.
\ed
Note that the threads are removed in the integral current space and we are left with the sphere, 
but distances on the sphere are measured using $d_{Y_\varepsilon}$ instead of  $d_{\sphere^m}$. 
\bd\label{D:ICSassociatedtoXep}
The integral current associated to $(X_\varepsilon, d_{X_\varepsilon})$ is given by integration 
over the oriented Riemannian manifold $X_\varepsilon$. The integral current space will 
be denoted by $M_\varepsilon$. So,
\be
T_\varepsilon = T_1^\varepsilon + \sum_{i,j} T_{i,j}^\varepsilon,
\ee
where $T_1^\varepsilon$ is integration over the punctured sphere 
$\sphere^m \setminus U_\vare$, and $T_{i,j}^\varepsilon$ is integration over 
the Gromov-Lawson tunnel $U_i^j$. 
\ed
\section{Intrinsic Flat convergence of $M_{\vare_j}$}
We now prove our main result -- that $M_{\varepsilon_j}$ 
$\mathcal{F}$-converges to $M_\infty$ as $\varepsilon_j \to 0$. 
We first prove they are close to the $N_{\vare_j}$ 
[Proposition~\ref{P:Fclosetospherewthreads}] and then
prove the $N_{\vare_j}$ converge to $M_\infty$ [Proposition~\ref{P:Fclosetospherewrestricted}].   We end this section with
a proof of Theorem~\ref{T:mainresult}.
\subsection{Spheres with tunnels are close to spheres with threads}
We now apply Theorem~\ref{T:pipefilling} inductively to prove that
our sequence of spheres with increasingly dense tunnels 
(Definition~\ref{D:defofXepsilon}) are increasingly close to
our sequence of spheres with increasingly dense threads (Definition~\ref{D:defofYepsilon}).
\bp\label{P:Fclosetospherewthreads}
There is a constant $C>0$ so that, for all sufficiently small $\varepsilon>0$, we have
\be\label{E:spherewtunnelsIFclosetospherewthreads}
	d_\Fm(M_\varepsilon,N_\varepsilon) \le C\varepsilon.
\ee
\ep
\begin{proof}
For each $\varepsilon>0$, enumerate the collection of pairs 
$\{q_j^i,q_i^j\}\subset \sphere^m$ from $1$ to 
$K:=N(\varepsilon)(N(\varepsilon)-1)/2$. For $k\in\{1,\ldots, K\}$,
let $M_\varepsilon(k)$ denote the integral current space resulting from 
replacing the first $k$ tunnels $U_i^j$ in the construction of $M_\varepsilon$ 
with the corresponding 
threads $[0,L_j^i]$ as in the construction of $N_\varepsilon$. 
Notice that $M_\varepsilon(0)=M_\varepsilon$ and $M_\varepsilon(K)=N_\varepsilon$. 

We will require some more properties of $M_\varepsilon(k)$. Evidently, $|L_j^i|\leq 2$ 
and $\mathrm{diam}(M_\varepsilon(k))\leq \pi$ for all $k$ and all $\varepsilon$. 
Now part $(4)$ of Theorem \ref{T:GLtunnel} implies that there is
a constant $C_1>0$ so that $\vol(U_j^i)\leq C_1\rho(\varepsilon)^{m-1}$.
Using this and our choice of $\rho(\varepsilon)$, one may estimate
\begin{align}
\vol(M_\varepsilon(k))&\leq\vol(\sphere^m)+\sum_{i<j}^{N(\varepsilon)}\vol(U_j^i)\notag\\
{}&\leq\vol(\sphere^m)+N(\varepsilon)^2C_1\rho(\varepsilon)^{m-1}\notag\\
{}&=\vol(\sphere^m)+\varepsilon^{m-1} C_2\notag,
\end{align}
which is bounded above uniformly in $k$ and $\varepsilon$.
Having uniform control on the above quantities, we can conclude that
the constant in inequality (\ref{E:pipfillingIFestimate}) obtained from 
applying Theorem \ref{T:pipefilling} to a ball $B_{q_j^i}(\rho(\varepsilon))$ in 
$M_\varepsilon(k)$ is uniformly bounded in $k$ and $\varepsilon$ by some constant $C_3$.

Now we iteratively apply Theorem \ref{T:pipefilling}
to finish the proof:
\begin{align}
d_\Fm(M_\varepsilon,N_\varepsilon)&\leq\sum_{k=1}^Kd_\Fm(M_\varepsilon(k-1),
	M_\varepsilon(k))\notag\\
{}&\leq N(\varepsilon)^2 C_3\rho(\varepsilon)\notag\\
{}&=\varepsilon C_3\notag.
\end{align}
\end{proof}
\subsection{Spheres with threads are close to sphere with the restricted Euclidean metric:}
Our next goal is to show that the spheres with increasingly dense threads $N_\varepsilon$ 
converge to the sphere with the restricted Euclidean distance $M_\infty$ in the intrinsic flat sense.
\bp\label{P:Fclosetospherewrestricted}
The integral current spaces $N_\varepsilon$ converge to $M_\infty$
in the intrinsic flat sense as $\varepsilon\to0$.
\ep
\begin{proof}
Notice that the underlying spaces and integral currents of $Y_\varepsilon$ and 
$M_\infty$ are identical for all $\varepsilon$. In this setting, 
to show $N_\varepsilon\to M_\infty$ in the intrinsic flat sense, it suffices to show
\be\label{E:uniformconvergence}
\sup_{\sphere^m\times\sphere^m}|d_{Y_\varepsilon}-d_\mathbb{E}|\to0
\ee
and the existance of a $\lambda>0$, independant of $\varepsilon$, so that
\be\label{E:lipconvergence}
\frac{1}{\lambda}\leq\frac{d_{Y_\varepsilon}(x,y)}{d_\mathbb{E}(x,y)}\leq\lambda
\ee
for all $x,y\in\sphere^m$.
See \cite[Theorem 9.5.3]{SormaniChapter} for details. We will proceed by
verrifying conditions (\ref{E:uniformconvergence}) and (\ref{E:lipconvergence}).

For $\varepsilon>0$ and $x,y\in\sphere^m$, choose points $p_i$ and $p_{i'}$ in the net 
$\{p_i\}_{i=1}^{N(\varepsilon)}$ closest to $x$ and $y$, respectively. Now,
\begin{align}\label{E:Fclosetospherewrestricted1}
d_{Y_\varepsilon}(x,y)&\leq d_{\sphere^m}(x,q_{i'}^i)+d_{\mathbb{E}}(q_{i'}^i,q_i^{i'})
	+d_{\sphere^m}(q_i^{i'},y)\notag\\
{}&\leq6\varepsilon+d_{\mathbb{E}}(q_{i'}^i,q_i^{i'})
\end{align}
where the first inequality follows from the fact that 
$d_{\mathbb{E}}(q_{i'}^i,q_i^{i'})=d_{Y_\varepsilon}(q_{i'}^i,q_i^{i'})$
and the second inequality follows from our choice of points $q_i^j\in\partial B_{p_i}(\varepsilon)$.
Now we would like to compare $d_{\mathbb{E}}(x,y)$ and $d_{\mathbb{E}}(q_{i'}^i,q_i^{i'})$.
Notice that
\begin{align}\label{E:Fclosetospherewrestricted2}
d_{\mathbb{E}}(q_{i'}^i,q_i^{i'})&\leq d_{\mathbb{E}}(x,q_{i'}^i)+d_{\mathbb{E}}(x,y)+d_{\mathbb{E}}(q_i^{i'},y)\notag\\
{}&\leq 6\varepsilon+d_\mathbb{E}(q_{i'}^i,q_i^{i'}).
\end{align}
Combining inequalities (\ref{E:Fclosetospherewrestricted1}) and 
(\ref{E:Fclosetospherewrestricted2}), we obtain
\[
d_{Y_\varepsilon}(x,y)-d_{\mathbb{E}}(x,y)\leq12\varepsilon.
\]
This implies condition (\ref{E:uniformconvergence}).

All that remains is to verrify condition (\ref{E:lipconvergence}). 
To this end, we restrict our attention to $\varepsilon>0$ small enough so that 
\[
\frac{1}{100}r^2\leq 2-2\cos(r)
\]
holds for all $r\in[0,\rho(\varepsilon)]$. Now we consider two cases.

First, assume that $x,y\in\sphere^m$ satisfy $d_\mathbb{E}(x,y)\geq \rho(\varepsilon)$.
Then inequalities (\ref{E:Fclosetospherewrestricted1}) and 
(\ref{E:Fclosetospherewrestricted2}) imply 
\begin{align}\label{E:lipcase1}
\frac{d_{Y_\varepsilon}(x,y)}{d_\mathbb{E}(x,y)}\leq 13.
\end{align}
Next, assume that $x,y\in\sphere^m$ satisfy $d_\mathbb{E}(x,y)< \rho(\varepsilon)$.
In this case, notice that we may assume that $d_{Y_\varepsilon}(x,y)=d_{\sphere}(x,y)$ since
$B_x(\rho(\varepsilon))$ will intersect at most one of the balls 
$B_{q_j^i}(\rho(\varepsilon))$ for small enough $\varepsilon$. 
With this in mind, we can estimate
\begin{align}
d_\mathbb{E}(x,y)^2&=2-2\cos(d_{Y_\varepsilon}(x,y))\notag\\
{}&\geq\frac{1}{100}d_{Y_\varepsilon}(x,y)^2\notag.
\end{align}
Rearranging this, we find
\be\label{E:lipcase2}
\frac{d_{Y_\varepsilon}(x,y)}{d_\mathbb{E}(x,y)}\leq10.
\ee
Finally, notice that, for any $x,y\in\sphere^m$,
\be\label{E:lipcase3}
1\leq \frac{d_{Y_\varepsilon}(x,y)}{d_\mathbb{E}(x,y)}.
\ee
Combining (\ref{E:lipcase1}), (\ref{E:lipcase2}), and (\ref{E:lipcase3}),
we conclude that condition (\ref{E:lipconvergence}) holds with $\lambda=13$, finishing
the proof.
\end{proof}
\subsection{Proof of Theorem~\ref{T:mainresult}:}
By the triangle inequality, Propositons \ref{P:Fclosetospherewthreads}, 
and \ref{P:Fclosetospherewrestricted} we see that 
\[
	d_\Fm(M_\varepsilon,M_\infty) 
		\le d_\Fm(M_\varepsilon,N_\varepsilon) + d_\Fm(N_\varepsilon,M_\infty) 
		\to 0
\]
as $\vare \to 0$. Theorem \ref{T:mainresult} follows.

\bibliography{sphere1}
\bibliographystyle{amsalpha}

\end{document}